\pgfplotsset{compat=newest}
\newtheorem{lemma}{Lemma}[section]
\newtheorem{proposition}[lemma]{Proposition}
\newtheorem{theorem}[lemma]{Theorem}
\newtheorem{corollary}[lemma]{Corollary}
\newtheorem{definition}[lemma]{Definition}
\newtheorem{example1}[lemma]{Example}
\newtheorem{rem1}[lemma]{Remark}
\newtheorem{assumption}[lemma]{Assumption}
\newtheorem{alg1}[lemma]{Algorithm}
\newtheorem{me1}[lemma]{Mechanism}
\newenvironment{remark}{\begin{rem1}\rm}{\end{rem1}}
\newenvironment{example}{\begin{example1}\rm}{\end{example1}}
\newcommand{\bbr}{\mathbb{R}}
\newcommand{\R}{\mathbb{R}}
\newcommand{\bbx}{\mathbb{X}}
\newcommand{\xcal}{\mathcal{X}}
\newcommand{\pcal}{\mathcal{P}}
\newcommand{\wcal}{\mathcal{W}}
\newcommand{\tcal}{\mathcal{T}}
\newcommand{\T}{\top}
\newcommand{\diag}{\operatorname{diag}}
\newcommand{\ind}{\mathbbm{1}}
\newcommand{\cl}{\operatorname{cl}}
\newcommand{\co}{\operatorname{co}}
\newcommand{\bd}{\operatorname{bd}}
\renewcommand{\int}{\operatorname{int}}
\DeclareMathOperator*{\argmin}{arg\,min}
\begin{document}

\title{Deep Learning the Efficient Frontier of Convex Vector Optimization Problems} 
\author{Zachary Feinstein \thanks{Stevens Institute of Technology, School of Business, Hoboken, NJ 07030, USA, zfeinste@stevens.edu.} \and Birgit Rudloff \thanks{Vienna University of Economics and Business, Institute for Statistics and Mathematics, Vienna A-1020, AUT, brudloff@wu.ac.at.}}
\maketitle
\abstract{
In this paper, we design a neural network architecture to approximate the weakly efficient frontier of convex vector optimization problems (CVOP) satisfying Slater's condition.  The proposed machine learning methodology provides both an inner and outer approximation of the weakly efficient frontier, as well as an upper bound to the error at each approximated efficient point.  In numerical case studies we demonstrate that the proposed algorithm is effectively able to approximate the true weakly efficient frontier of CVOPs. This remains true even for large problems (i.e., many objectives, variables, and constraints) and thus overcoming the curse of dimensionality.
}

\section{Introduction}\label{sec:intro}

Vector and multiobjective optimization is the field of maximizing or minimizing multiple objective functions simultaneously.  As with traditional (scalar) optimization, these problems can be classified as, e.g., linear, convex, or nonconvex.  Within this work we focus on convex vector optimization problems (CVOP).  Such problems are widely applicable in practice.  Specifically, multiobjective optimization has been applied to fields as diverse as finance (for the mean-risk problem of~\cite{markowitz1952} in~\cite{KR18} as well as for set-valued risk measures~\cite{FR14-alg}), electric and power systems \cite{cassidy2016risk}, structural design \cite{marler2004survey} and references therein, and chemical engineering \cite{bhaskar2000applications}.

Convex vector optimization problems are those with two or more convex objectives and convex constraints.  Already multiple algorithms exist to approximate the solutions for these problems (or for specific subclasses of such problems).  We refer the interested reader to \cite{RW05}
for an overview of some algorithms for CVOPs.  For numerical tractability, these methods result in (polyhedral) approximations of the efficient frontier.  That is, rather than finding the exact solution, only an (inner and/or outer) approximation of the efficient frontier can be found.  We wish to highlight algorithms based on Benson's approximation algorithm~\cite{benson98} for linear vector optimization problems, that have been generalized to the convex case in~\cite{ESS11, LRU14, DLSW21, KU22}. 
While powerful, these algorithms suffer from curse of dimensionality as the computational complexity of such problems grows exponentially in the dimension of the problem as, e.g., the vertex enumeration subproblem is NP-hard~\cite{khachiyan2008generating}. Typically in practice, when the number of objective functions is five or higher, those algorithms cannot solve the vector optimization problem anymore; this computational bound holds for convex or even linear vector optimization problems. Such computational constraints similarly hold true for other types of algorithms, e.g., the box coverage algorithm of~\cite{eichfelder2021approximation} for computing approximate solutions of multiobjective optimization problems. In Section~\ref{sec:many} we will revisit a test instance from~\cite{eichfelder2021approximation} that could not be solved (in a reasonable amount of time or for higher dimensions at all) for dimension four and higher. In contrast, with the machine learning methodology that we propose in this paper, we can still solve that problem for a $20$ dimensional objective space in under a minute on a personal machine. 
Similarly, the sandwich algorithm from~\cite{bokrantz2013algorithm} provides an inner and outer approximation of the efficient frontier, but suffers also the curse of dimensionality as it involves vertex enumeration. In~\cite{bokrantz2013algorithm} problems up to $14$ objectives have been solved. We will reuse example~7.2 from~\cite{bokrantz2013algorithm} and solve it up to $5000$ objectives instead.

Within this work we construct a new approximation method for CVOPs using neural networks.  The primary contributions and innovations of this work are two-fold.
\begin{enumerate}
\item We propose a machine learning methodology to (approximately) solve CVOPs by considering the weighted-sum scalarizations of the multiobjective problem.  Due to the use of neural networks, the proposed algorithm can tractably be applied to large vector optimization problems without the need for vertex enumeration (as required by, e.g., \cite{benson98,bokrantz2013algorithm,LRU14}).  That is, we are able to overcome the curse of dimensionality in solving vector optimization problems. 
In fact, as demonstrated in numerical case studies, the computational time required for the machine learning approach undertaken herein grows (roughly) linearly with the problem size.
\item By proposing two neural networks that are jointly trained, the proposed machine learning methodology provides both inner and outer approximations of the entire weakly efficient frontier of the relevant CVOP.  In constructing the algorithm in this way, we are able to quantify (an upper bound to) the error of these approximations at each point on the weakly efficient frontier.  By having inner and outer approximations with a known error, this machine learning methodology can be applied in practice without the typical risks of machine learning -- the uncertain approximation error. In comparison to a naive application of machine learning for vector optimization problems, these neural networks were motivated by the vertex enumeration approaches (e.g., \cite{bokrantz2013algorithm,LRU14,eichfelder2021approximation}) which also provide inner and outer approximations. 
\end{enumerate}

The organization of this paper is as follows.  
In Section~\ref{sec:background}, the background, notation, and structure of both CVOPs and neural networks is provided.
In Section~\ref{sec:nn}, the proposed neural network architecture for approximating the weakly efficient frontier of a \emph{strictly} convex vector optimization problem is provided.  Specifically, a primal neural network is proposed in Section~\ref{sec:nn-primal}, a dual neural network is provided in Section~\ref{sec:nn-dual}, and the joint loss function for these neural networks is given in Section~\ref{sec:nn-loss}. Then, in Section~\ref{sec:approx}, inner and outer approximations of the upper image of the problem are constructed from these neural networks.  Within this section, we additionally provide a function which provides an upper bound to the errors for the inner approximation at each point on the weakly efficient frontier.  We implement the proposed machine learning approach for four strictly convex optimization problems in Section~\ref{sec:cs}.  We use these problems both to validate the proposed approach and to demonstrate its applicability to large-scale problems thus overcoming the curse of dimensionality.
Finally, in Section~\ref{sec:convex}, we extend the machine learning framework of Section~\ref{sec:nn} to allow for convex, but not strictly convex, vector optimization problems.  This is demonstrated on two small problems -- a linear vector optimization problem and the mean-risk portfolio optimization problem -- that permit simple visualizations.

\section{Background}\label{sec:background}
Within this section, we provide the background material necessary for this work.  First, in Section~\ref{sec:background-vop}, the vector optimization setting that is considered throughout this work is presented along with basic definitions and results.  Second, in Section~\ref{sec:background-nn}, some background on neural networks and their usage for approximating, i.e.\ learning, a target function, are given.

\subsection{Vector optimization problems}\label{sec:background-vop}

Consider a decision maker attempting to simultaneously minimize $P$ objective functions $f_i: \bbx \to \R$ for $i=1,...,P$ over the set of decisions $\bbx \subseteq \R^N$.    Throughout, we assume that the decision space $\bbx \neq \emptyset$ is nonempty and that the ordering cone in the objective space is $\R^P_+$.  To simplify notation, we denote $f(x) := (f_1(x),...,f_P(x))^\T$ for any $x \in \bbx$.  That is, throughout this work we consider a vector optimization problem (VOP) of the form
\begin{equation}\label{eq:vop}
\min\{f(x) \; | \; x \in \bbx\}.
\end{equation}

Before continuing to the main results of this work, we provide some basic definitions and results on vector optimization.  We refer the interested reader to, e.g.,~\cite{jahn2011vector} for more details on these problems and definitions. We denote by $f[\bbx]=  \{f(x) \; | \; x \in \bbx\}$ the image of the feasible set.
\begin{definition}\label{defn:bounded}
The VOP \eqref{eq:vop} is \textbf{\emph{bounded}} if there exists some $y \in \R^P$ such that $f[\bbx] \subseteq \{y\} + \R^P_+$.
\end{definition}

\begin{definition}\label{defn:pareto}
Consider the VOP \eqref{eq:vop}.
$x^* \in \bbx$ is a \textbf{\emph{(weak) minimizer}} if $f(x^*) - f(x) \in \R^P_+$ for some $x \in \bbx$ implies $f(x) = f(x^*)$ (resp.\ $f(x^*) - f(x) \not\in \R^P_{++}$ for every $x \in \bbx$).
The set of (weak) minimizers is denoted by $\xcal^* \subseteq \bbx$ (resp.\ $\xcal \subseteq \bbx$).
The \textbf{\emph{(weak) efficient frontier}} is given by $f[\xcal^*] := \{f(x^*) \; | \; x^* \in \xcal^*\}$ (resp.\ $f[\xcal]$). 
The \textbf{\emph{upper image}} is provided by $\pcal := \cl\left(f[\xcal] + \R^P_+\right)$.
\end{definition}

Within this work we are specifically interested in problems with feasible space
\begin{equation}\label{eq:constraints}
\bbx := \{x \in \R^N \; | \; g(x) \in -\R^M_+\}
\end{equation}
for $g: \R^N \to \R^M$.  
In particular, we are interested in \emph{convex} vector optimization problems (CVOP), i.e., problems of the form
\begin{equation}\label{eq:cvop}\tag{P}
\min\{f(x) \; | \; g_j(x) \leq 0,~j = 1,...,M\}
\end{equation}
such that $f_i,g_j$ are convex functions for every $i = 1,...,P$ and $j = 1,...,M$.

It is well known that (weak) minimizers of~\eqref{eq:cvop} are related to solutions of the weighted-sum scalarization problem.
\begin{proposition}\label{prop:scalar}
Consider the CVOP~\eqref{eq:cvop}.
\begin{enumerate}
\item $x^*$ is a weak minimizer if and only if there exists some $w \in \wcal := \{w \in \R^P_+ \; | \; {\bf 1}^\T w = 1\}$, where we denote ${\bf 1} := (1,1,...,1)^\T \in \R^P$, such that $x^*$ solves the weighted-sum scalarization problem
    \begin{equation}\label{eq:scalarization}\tag{wP}
    \min\Big\{\sum_{i = 1}^P w_i f_i(x) \; | \; g_j(x) \leq 0,~j = 1,...,M\Big\}.
    \end{equation}
\item $x^*$ is a minimizer if it solves the weighted-sum scalarization problem~\eqref{eq:scalarization} for some $w \in \wcal \cap \R^P_{++}$.
\end{enumerate}
\end{proposition}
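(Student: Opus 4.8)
The plan is to establish the connection between weak minimizers of the CVOP and solutions of the weighted-sum scalarization via convex separation. For the \emph{if} direction of part (1), suppose $x^*$ solves~\eqref{eq:scalarization} for some $w \in \wcal$ but is not a weak minimizer; then there exists $\tilde x \in \bbx$ with $f(x^*) - f(\tilde x) \in \R^P_{++}$, i.e.\ $f_i(\tilde x) < f_i(x^*)$ for all $i$. Taking the $w$-weighted sum and using $w \in \R^P_+$ with $\mathbf{1}^\T w = 1$ (so $w \neq 0$) yields $\sum_i w_i f_i(\tilde x) < \sum_i w_i f_i(x^*)$, contradicting optimality of $x^*$ for~\eqref{eq:scalarization}. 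This direction requires no convexity.

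The \emph{only if} direction of part (1) is where convexity and Slater's condition enter, and I expect this to be the main obstacle. First I would observe that since $x^*$ is a weak minimizer, the image $f[\bbx]$ does not meet the open set $f(x^*) - \R^P_{++}$; equivalently the convex set $f[\bbx] + \R^P_+$ (or its closure, the upper image $\pcal$) has empty intersection with the open convex set $f(x^*) - \R^P_{++}$. Convexity of $f[\bbx] + \R^P_+$ follows from convexity of the $f_i$ and of $\bbx$. I would then invoke a separating hyperplane theorem to produce a nonzero linear functional $w \in \R^P$ separating these two convex sets, with $f(x^*)$ on the boundary. The geometry of the ordering cone forces $w \in \R^P_+$ (otherwise separation fails against the recession directions $\R^P_+$), and after normalizing by $\mathbf{1}^\T w$ we obtain $w \in \wcal$. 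The separation inequality then reads $w^\T f(x) \geq w^\T f(x^*)$ for all $x \in \bbx$, which is precisely the statement that $x^*$ solves~\eqref{eq:scalarization}. The delicate point is ensuring the separating functional is nonzero \emph{and} lies in the nonnegative orthant; this is where normalization and the recession-cone argument must be handled carefully.

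For part (2), I would argue by contradiction along the same lines as the first direction but now using strict positivity of the weights. Suppose $x^*$ solves~\eqref{eq:scalarization} for some $w \in \wcal \cap \R^P_{++}$ but is \emph{not} a minimizer; then there exists $\tilde x \in \bbx$ with $f(x^*) - f(\tilde x) \in \R^P_+ \setminus \{0\}$, meaning $f_i(\tilde x) \leq f_i(x^*)$ for all $i$ with strict inequality for at least one index. Since every $w_i > 0$, the strict inequality on even a single coordinate forces $\sum_i w_i f_i(\tilde x) < \sum_i w_i f_i(x^*)$, again contradicting optimality of $x^*$. The only arithmetic to check is that a single strict coordinate inequality, weighted by a strictly positive coefficient, survives summation against the remaining nonstrict terms, which it does precisely because $w \in \R^P_{++}$.

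Overall the routine directions are elementary monotonicity arguments for the weighted sum, while the genuine content is the separation argument underlying the \emph{only if} direction of part (1). Here Slater's condition and convexity guarantee that the upper image $\pcal$ is a closed convex set with nonempty interior whose supporting hyperplane at the boundary point $f(x^*)$ yields the desired weight vector; I would rely on~\cite{jahn2011vector} for the precise form of the scalarization theorem rather than reproving the separation machinery from scratch.
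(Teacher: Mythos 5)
Your argument is correct, but it takes a different route from the paper in the sense that the paper does not prove the proposition at all: its proof is a two-line citation, deferring part (1) to Corollary~5.29 and part (2) to Theorem~5.18(b) of~\cite{jahn2011vector}. What you have written out is essentially the content of those cited results: the elementary monotonicity arguments for the ``if'' direction of (1) and for (2) are exactly right (and, as you note, need no convexity), and your separation argument for the ``only if'' direction of (1) is the standard proof --- $f[\bbx] + \R^P_+$ is convex and disjoint from the open convex set $f(x^*) - \R^P_{++}$, the separating functional must lie in $\R^P_+$ because otherwise it is unbounded below along the recession directions of $\R^P_+$, and normalization by ${\bf 1}^\T w > 0$ lands you in $\wcal$. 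The trade-off is the obvious one: the paper's citation is concise, while your version is self-contained and makes transparent which hypotheses drive which direction. One small correction to your closing paragraph: Slater's condition plays no role anywhere in this proposition, and indeed the proposition is stated in the paper \emph{before} Assumption~\ref{ass:slater} is imposed. Your own separation sketch never uses it --- only convexity of the $f_i$ and of $\bbx$ is needed --- and the nonempty interior you attribute partly to Slater is automatic, since the upper image contains translates of $\int \R^P_+$ regardless of any constraint qualification. Slater's condition enters the paper only later, for strong duality and the KKT characterization (Theorem~\ref{thm:kkt}), not for the scalarization result here.
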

\begin{proof}
The first statement follows from Corollary 5.29 of~\cite{jahn2011vector}.  The second statement follows from Theorem 5.18(b) of~\cite{jahn2011vector}.
\end{proof}

\begin{assumption}\label{ass:slater}
Throughout this work we will make the following assumptions:
\begin{itemize}
\item $f_i$ are strictly convex and continuously differentiable;
\item $g_j$ are convex and continuously differentiable;
\item \eqref{eq:cvop} is bounded;
\item Slater's condition holds, i.e., there exists some $\bar x \in \R^N$ such that $g_j(\bar x) < 0$ for every $j = 1,...,M$. 
\end{itemize}
\end{assumption}

\begin{remark}
\label{rem:convex}
Let us remark on some possible relaxations of these assumptions.
\begin{enumerate}
\item\label{rem:convex-convex}
Note that the strict convexity of the objective functions $f_i$ as encoded in Assumption~\ref{ass:slater} could be relaxed for all results presented within this work so long as the weighted-sum scalarization problem~\eqref{eq:scalarization} is strictly convex for all scalarizations $w \in \wcal$ of interest. This will be used in the example presented in Section~\ref{sec:mean-var}. It will also prove useful in Section~\ref{sec:convex} to allow for general convex, but not strictly convex, objective functions $f_i$ by augmenting the problem with an additional strictly convex objective function. This augmentation ensures that the new objective has strictly convex scalarizations for all $w \in \wcal$ of interest.
\item\label{rem:convex-equality}
As presented above, we focus solely on inequality constrained CVOPs within this work.  
We wish to note that linear equality constraints can be introduced within this setting as well.
Consider the CVOP
\begin{equation*}
\min\{f(\hat{x}) \; | \; g_j(\hat{x}) \leq 0,~j = 1,...,M,\; A\hat{x} = b\}
\end{equation*}
for $A \in \R^{O \times N}$ and $b \in \R^O$.  Without loss of generality, we will assume $A$ is of full rank.
Let $A_\perp \in \R^{N \times (N-O)}$ provide a basis for the null space of $A$; that is, $A A_\perp z = {\bf 0}$ for any $z \in \R^{N-O}$.
Furthermore, let $\tilde{x} \in \R^N$ define a particular solution of the linear constraints, i.e, $A\tilde{x} = b$.
The equality constrained CVOP can then be reformulated in the form~\eqref{eq:cvop} as
\begin{equation*}
\min\{f(A_{\perp}x + \tilde{x}) \; | \; g_j(A_{\perp}x + \tilde{x}) \leq 0,~j = 1,...,M\} 
\end{equation*}
with $N-O$ variables.  Provided this reformulated problem satisfies Assumption~\ref{ass:slater}, the methodology considered within this work can be applied directly to this reformulation. This will be used in the examples considered in Sections~\ref{sec:mean-var} and~\ref{sec:convex}.
\item
Herein we take the ordering cone $\R^P_+$.  All results can be considered in which the minimization is taken with respect to an ordering cone $C \subseteq \R^p$ with nonempty interior instead.  The only modification necessary is that the scalarizations are taken with respect to $w \in C^+ := \{w \in \R^p \; | \; c^\T w \geq 0 \; \forall c \in C\}$ such that $w^\T c = 1$ for some fixed element $c \in \int C$ provided the objective $f$ is $C$-convex.
\end{enumerate}
\end{remark}

The weakly efficient frontier can be traced through considerations of $w\in\wcal\mapsto f(x^*(w))$ where $x^*(w)$ is a solution to the scalarization problem~\eqref{eq:scalarization} in Proposition~\ref{prop:scalar} for every $w \in \wcal$. We recall that the goal of this work is to determine (an approximation of) the weakly efficient frontier. As such, we seek to approximate the mapping $w\in\wcal \mapsto x^*(w)$ rather than just finding its value at a finite number of scalarizations.

In addition to the primal scalarization problem~\eqref{eq:scalarization} we will also consider its Lagrange dual problem.  For that, consider the dual function $d: \R^M \times \wcal \to \R$ defined by
\begin{equation}\label{eq:dual}
d(\lambda,w) := \inf_{x \in \R^N} \Big[\sum_{i = 1}^P w_i f_i(x) + \sum_{j = 1}^M \lambda_j g_j(x)\Big].
\end{equation}
The dual problem is then to maximize this dual function subject to the dual feasibility constraints, i.e.,
\begin{equation}\label{eq:scalarization-dual}\tag{wD}
\max \Big\{\inf_{x \in \R^N} \Big[\sum_{i = 1}^P w_i f_i(x) + \sum_{j = 1}^M \lambda_j g_j(x)\Big] \; | \; \lambda \in \R^M_+\Big\}.
\end{equation}
By Assumption~\ref{ass:slater}, strong duality is satisfied for these scalar problems, i.e.\ the optimal values of~\eqref{eq:scalarization} and~\eqref{eq:scalarization-dual} are equal, and a dual solution exists.

The method presented in this paper to approximate the weakly efficient frontier by machine learning methods will rely heavily on the following well-known KKT conditions. Note that the KKT conditions~\eqref{eq:kkt}-\eqref{eq:kkt-cs} below are the usual KKT conditions of the scalarization problem~\eqref{eq:scalarization}, but can also be obtained using the multiobjective KKT conditions for the CVOP~\eqref{eq:cvop} directly (without the need to go through the scalarization first), see~\cite{eichfelder2021proximity}.

\begin{theorem}\label{thm:kkt}
$(x^*(w),\lambda^*(w)) \in \R^N \times \R^M$ are a pair of primal ($x^*(w)$) and dual ($\lambda^*(w)$) solutions of the scalarization problems~\eqref{eq:scalarization}, respectively~\eqref{eq:scalarization-dual}, w.r.t.\ $w \in \wcal$ if and only if they jointly satisfy the following KKT conditions
\begin{align}
\label{eq:kkt}  {\bf 0} &= Jf(x^*(w))^\T w + Jg(x^*(w))^\T \lambda^*(w) \\
\label{eq:kkt-primal}  {\bf 0} &\geq g(x^*(w)) \\
\label{eq:kkt-dual}  {\bf 0} &\leq \lambda^*(w) \\
\label{eq:kkt-cs}  0 &= \lambda_i^*(w) g_i(x^*(w))  \quad i=1,...,m,
\end{align}
where $J$ denotes the Jacobian of the corresponding function.
\end{theorem}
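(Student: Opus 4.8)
The plan is to recognize Theorem~\ref{thm:kkt} as a standard KKT characterization for the convex scalarization~\eqref{eq:scalarization}, and to prove it by establishing the two implications separately, invoking Assumption~\ref{ass:slater} to guarantee that the KKT conditions are both necessary and sufficient. The key observation is that under Assumption~\ref{ass:slater} the scalarized objective $x \mapsto \sum_{i=1}^P w_i f_i(x)$ is convex (indeed strictly convex) and continuously differentiable, the constraint functions $g_j$ are convex and continuously differentiable, and Slater's condition holds. This is precisely the regularity setting in which the KKT conditions are equivalent to optimality for a convex program.

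First I would prove the ``only if'' direction (necessity). Suppose $x^*(w)$ solves~\eqref{eq:scalarization} and $\lambda^*(w)$ solves the dual~\eqref{eq:scalarization-dual}. Because Slater's condition holds, the standard constraint qualification is satisfied, so any primal optimal point must admit multipliers satisfying the KKT system; equivalently, strong duality (already asserted in the excerpt to hold by Assumption~\ref{ass:slater}) together with primal and dual feasibility forces the complementary slackness conditions~\eqref{eq:kkt-cs} and the stationarity condition~\eqref{eq:kkt}. Concretely, stationarity~\eqref{eq:kkt} is the statement that the gradient of the Lagrangian $\sum_i w_i f_i + \sum_j \lambda_j^* g_j$ vanishes at $x^*(w)$, which is exactly ${\bf 0} = Jf(x^*(w))^\T w + Jg(x^*(w))^\T \lambda^*(w)$ after writing the gradient of each $f_i$ and $g_j$ as the corresponding row of the Jacobian. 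Primal feasibility~\eqref{eq:kkt-primal} and dual feasibility~\eqref{eq:kkt-dual} are immediate from $x^*(w) \in \bbx$ and $\lambda^*(w) \in \R^M_+$.

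Next I would prove the ``if'' direction (sufficiency), which is where convexity does the real work. Assume $(x^*(w), \lambda^*(w))$ satisfy~\eqref{eq:kkt}--\eqref{eq:kkt-cs}. Feasibility of $x^*(w)$ for~\eqref{eq:scalarization} follows from~\eqref{eq:kkt-primal}, and feasibility of $\lambda^*(w)$ for~\eqref{eq:scalarization-dual} from~\eqref{eq:kkt-dual}. The stationarity condition~\eqref{eq:kkt} says $x^*(w)$ is a global minimizer of the convex Lagrangian $x \mapsto \sum_i w_i f_i(x) + \sum_j \lambda_j^*(w) g_j(x)$, since a convex differentiable function is globally minimized exactly where its gradient vanishes. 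Hence the dual value $d(\lambda^*(w), w)$ equals $\sum_i w_i f_i(x^*(w)) + \sum_j \lambda_j^*(w) g_j(x^*(w))$, and by complementary slackness~\eqref{eq:kkt-cs} the second sum is zero, so $d(\lambda^*(w),w) = \sum_i w_i f_i(x^*(w))$. This shows the primal objective at $x^*(w)$ coincides with a dual feasible value; by weak duality this common value sandwiches both optima, so $x^*(w)$ is primal optimal and $\lambda^*(w)$ is dual optimal.

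The main obstacle, and the only genuinely nontrivial point, is the necessity direction: asserting that a primal optimizer actually admits a multiplier vector. This requires a constraint qualification, and the whole reason Slater's condition appears in Assumption~\ref{ass:slater} is to supply it. I would therefore cite a standard reference on convex optimization (or the multiobjective KKT treatment of~\cite{eichfelder2021proximity} noted before the theorem) for existence of multipliers under Slater, rather than reproving it. Everything else is routine: the sufficiency direction is an elementary consequence of convexity plus weak duality, and the translation between gradient-of-Lagrangian notation and the Jacobian-transpose form in~\eqref{eq:kkt} is purely notational.
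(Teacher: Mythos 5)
Your proposal is correct and matches the paper's approach: the paper's entire proof is a citation to the standard convex-optimization result under Slater's condition (Chapter~5.5.3 of~\cite{boyd2004convex}), and your argument---sufficiency via convexity of the Lagrangian plus weak duality, necessity via Slater/strong duality with the multiplier-existence step delegated to a standard reference---is precisely the argument given in that cited source. Nothing in your write-up deviates from or adds a gap relative to what the paper relies on.
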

\begin{proof}
Due to the assumption that Slater's condition holds (Assumption~\ref{ass:slater}), this result can be found in, e.g.,~\cite[Chapter 5.5.3]{boyd2004convex}.
\end{proof}

\subsection{Neural networks}\label{sec:background-nn}
Consider, now, the task of approximating a function $y^*: \tcal \to \R^m$ for some \emph{compact} set of input variables $\tcal \subseteq \R^n$.  Such an approximation problem is, fundamentally, a regression problem.  Within this work, we will focus on feed-forward neural networks to regress the function $y^*$. These functions can be viewed as a multi-stage regression model.  We refer the interested reader to, e.g.,~\cite[Chapter 6]{goodfellow2016deep} for more details on feed-forward neural networks.
\begin{definition}\label{defn:nn}
An $\ell$-hidden layer \textbf{\emph{neural network}} with $h_l$ nodes in layer $l \in \{0,1,...,\ell+1\}$ (such that $h_0 := n$ and $h_{\ell+1} := m$ for the input and output layers respectively) is a mapping $y_{\ell+1}: \tcal \times \Theta \to \R^m$ that can be decomposed as
\begin{align*}
y_l(t,\theta) &:= \Phi_l(\theta_{l,b} + \theta_{l,w} y_{l-1}(t,\theta)) \quad \forall l \in \{1,...,\ell+1\} 
\end{align*}
with $y_0(t,\theta) := t$
for \textbf{\emph{activation functions}} $\Phi_l: \R^{h_l} \to \R^{h_l}$ for every $l = 1,...,\ell+1$ and parameter space $\Theta \subseteq \{(\theta_{l,b},\theta_{l,w})_{l = 1}^{\ell+1} \; | \; \theta_{l,b} \in \R^{h_l}, \; \theta_{l,w} \in \R^{h_l \times h_{l-1}} \; \forall l = 1,...,\ell+1\}$.  The parameter $\theta_{l,b}$ is often called the \textbf{\emph{bias}} and $\theta_{l,w}$ is often called the \textbf{\emph{weights}} of the $l^{th}$ hidden layer.
Jointly, the number of hidden layers $\ell$, the number of nodes within each hidden layer $h_l$ for $l \in \{1,...,\ell\}$, the activation functions $\Phi_l$ for $l \in \{1,...,\ell+1\}$, and the restrictions on the parameter space $\Theta$ are called the \textbf{\emph{hyperparameters}} of the neural network.
\end{definition}
\begin{remark}\label{rem:density}
The connection structure of a neural network in layer $l$ is defined through the sparsity structure of the weights $\theta_{l,w}$ imposed within the parameter space $\Theta$.  A dense neural network, which we consider throughout the remainder of this work, is one in which $\Theta = \{(\theta_{l,b},\theta_{l,w})_{l = 1}^{\ell+1} \; | \; \theta_{l,b} \in \R^{h_l}, \; \theta_{l,w} \in \R^{h_l \times h_{l-1}} \; \forall l = 1,...,\ell+1\}$.
\end{remark}
Before continuing, we wish to consider some common choices of activation functions used within the neural network literature.
\begin{example}\label{ex:activation}
Each of the following activation functions $\phi: \R \to \R$ are often applied component-wise in practice (i.e., such that $\Phi( \textbf{z}) = (\phi(z_1),...,\phi(z_k))^\T$ for $\textbf{z} \in \R^k$).  Let $z \in \R$ be arbitrary. 
\begin{enumerate}
\item \textbf{Linear}: Consider the identity mapping $\phi(z) := z$.  This activation function is often denoted as the linear activation function as applying it along with the bias and weights creates a linear regression within that layer of the neural network.  Because the composition of linear functions is again linear, this activation function is typically only used for the output layer $\ell+1$ in practice.
\item \textbf{Rectified linear unit [ReLU]}: Consider the positive mapping $\phi(z) := z^+$.  This activation function takes just the positive part of the input, setting all negative values to $0$.
\item \textbf{Smooth ReLU}: Consider the mapping $\phi(z) := \log(1+\exp(z))$.  By construction, this activation function provides a positive value that limits to the identity mapping for positive inputs and 0 for negative inputs.  However, unlike the ReLU activation function, this activation is smooth. 
\item \textbf{Hyperbolic tangent}: Consider the mapping $\phi(z) := \operatorname{tanh}(z)$.  This activation function provides an S-shaped curve that tempers extreme values as it limits to plus or negative 1 for positive of negative inputs respectively.
\end{enumerate}
\end{example}

One fundamental reason for the prevalence of neural networks within the machine learning community is the so-called \emph{universal approximation theorem}.  This result says that for any \emph{continuous} target function $y^*: \tcal \to \R^m$, there exists a \emph{single-layer} ($\ell = 1$) neural network $y: \tcal \to \R^m$ that can uniformly approximate $y^*$.  This result has been expanded to deep neural networks ($\ell \geq 2$) in, e.g.,~\cite{hornik1989multilayer,kidger2020universal}.
\begin{theorem}\label{thm:uat}\cite[Theorem 3.1]{pinkus1999approximation}
Let $y^*: \tcal \to \R^m$ be a continuous function with compact domain and $\phi: \R \to \R$ be a continuous activation function (applied component-wise to vectors as in Example~\ref{ex:activation}).  For every $\epsilon > 0$ there exists a 1-layer neural network $y: \tcal \times \Theta \to \R^m$ with $h_1$ hidden nodes such that 
\[\inf_{\theta \in \Theta} \sup_{t \in \tcal} \|y^*(t) - y(t,\theta)\|_{2} < \epsilon\]
if and only if $\phi$ is not a polynomial.
\end{theorem}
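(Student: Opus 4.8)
The plan is to recognize Theorem~\ref{thm:uat} as the classical universal approximation theorem and reduce it to a density statement in the sup norm. First I would observe that a $1$-layer network with a linear output activation produces, in each output coordinate, a function of the form $t \mapsto d + \sum_{k=1}^{h_1} c_k\,\phi(a_k^\T t + b_k)$; thus, as $h_1$ is allowed to grow, the attainable functions are exactly the elements of $\operatorname{span}\{\phi(a^\T t + b) \; | \; a \in \R^n,\ b \in \R\}$ together with constants. The requirement that $\inf_{\theta}\sup_{t}\|y^*(t) - y(t,\theta)\|_2 < \epsilon$ hold for every $\epsilon > 0$ (with $h_1$ permitted to depend on $\epsilon$) is therefore equivalent to uniform density of this span in $C(\tcal;\R^m)$. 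The vector-valued case reduces to the scalar one by approximating each of the $m$ coordinates of $y^*$ to accuracy $\epsilon/\sqrt{m}$ and concatenating hidden units, so it suffices to prove that $\mathcal{M}(\phi) := \operatorname{span}\{\phi(a^\T t + b)\}$ is dense in $C(\tcal;\R)$ if and only if $\phi$ is not a polynomial.

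The ``only if'' direction is the easy one. If $\phi$ is a polynomial of degree $d$, then every ridge term $\phi(a^\T t + b)$ is a polynomial in $t$ of total degree at most $d$, so $\mathcal{M}(\phi)$ is contained in the finite-dimensional space $\Pi_d$ of polynomials of degree at most $d$. Since $C(\tcal;\R)$ is infinite-dimensional while $\Pi_d$ is a proper closed subspace, $\mathcal{M}(\phi)$ cannot be dense. By contraposition, density forces $\phi$ to be non-polynomial.

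For the ``if'' direction I would use a two-stage reduction. The first stage passes from many variables to one via ridge functions: the span of $\{g(a^\T t) \; | \; a \in \R^n,\ g \in C(\R)\}$ is dense in $C(\tcal;\R)$. This follows from Stone--Weierstrass once one notes that every monomial in $t$, and hence every polynomial, is a finite linear combination of pure powers $(a^\T t)^k$ taken over finitely many directions $a$ (a standard polarization/symmetric-tensor identity). Consequently it is enough to approximate an arbitrary univariate $g \in C(\R)$, uniformly on the compact interval $\{a^\T t \; | \; t \in \tcal\}$, by elements of $\operatorname{span}\{\phi(\lambda s + \theta) \; | \; \lambda,\theta \in \R\}$.

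The univariate non-polynomial case is the technical heart, and I expect it to be the main obstacle. When $\phi \in C^\infty$, I would differentiate $\phi(\lambda s + \theta)$ in $\lambda$: difference quotients of the maps $\lambda \mapsto \phi(\lambda s + \theta)$ lie in the closed span, and their limit at $\lambda = 0$ is $s^k \phi^{(k)}(\theta)$. Because $\phi$ is not a polynomial, for each $k$ there is some $\theta_k$ with $\phi^{(k)}(\theta_k) \neq 0$, so every monomial $s^k$ lies in the closed span, and Weierstrass then yields density. For a merely continuous $\phi$ one mollifies: a convolution $\phi * \psi$ with a smooth compactly supported $\psi$ is smooth and lies in the closed span, since the convolution integral is a uniform limit of Riemann sums of shifted copies $\phi(s - \tau_i)$, each of the admissible form $\phi(\lambda s + \theta)$ with $\lambda = 1$. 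One then chooses $\psi$ so that $\phi * \psi$ is itself non-polynomial (possible precisely because $\phi$ is not a polynomial) and applies the smooth case to $\phi * \psi$. The delicate points I would need to control are that the divided-difference limits remain inside the closed span with uniform error control on the compact interval, and that the mollification step genuinely delivers a smooth non-polynomial surrogate; these two estimates constitute the substance of the Leshno--Pinkus argument underlying the cited result.
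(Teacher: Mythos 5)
The paper contains no proof of this statement: it is quoted directly from the literature, with the argument deferred to \cite[Theorem 3.1]{pinkus1999approximation}. Your sketch is a faithful reconstruction of precisely that argument (Pinkus's streamlining of Leshno--Lin--Pinkus--Schocken): the reduction of the vector-valued case to scalar coordinates, the ridge-function reduction via polarization of monomials into powers $(a^\T t)^k$ plus Stone--Weierstrass, the divided-difference trick in $\lambda \mapsto \phi(\lambda s + \theta)$ producing $s^k \phi^{(k)}(\theta)$ in the closed span for smooth $\phi$, mollification to pass to merely continuous $\phi$, and the ``only if'' direction via containment of the span in the finite-dimensional polynomial space $\Pi_d$. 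All of these steps are sound as you state them; in particular, the uniform convergence of the $k$-th difference quotients on a compact $s$-interval does hold because $\phi^{(k+1)}$ is bounded on compacta, and the Riemann sums of $\phi * \psi$ do converge uniformly there.

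The one substantive incompleteness is the step you flag but assert rather than prove: that $\psi \in C_c^\infty$ can be chosen so that $\phi * \psi$ is non-polynomial. This is genuinely the hard kernel of the theorem. One must show that if $\phi * \psi$ is a polynomial for \emph{every} such $\psi$, then $\phi$ is itself a polynomial; in Leshno et al.\ this requires a Baire-category argument to bound the polynomial degrees uniformly over $\psi$, followed by a distributional identity (a continuous function whose $(d+1)$-st distributional derivative vanishes lies in $\Pi_d$). Without that lemma your proposal is an honest and correctly structured sketch rather than a complete proof, which is appropriate given that the paper itself only cites the result. One pedantic caveat on the statement as restated in the paper: your ``only if'' argument needs $\Pi_d$ restricted to $\tcal$ to be a \emph{proper} closed subspace of $C(\tcal;\R)$, which holds whenever $\tcal$ is infinite; for a finite $\tcal$ the equivalence fails (a polynomial activation with enough nodes can interpolate), and Pinkus sidesteps this by asserting density uniformly on all compacta of $\R^n$ rather than on a single fixed compact set.
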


As noted within the statement of the universal approximation theorem (Theorem~\ref{thm:uat}), there only exists parameters $\theta \in \Theta$ such that this approximation holds.  To attempt to find these parameters, we undertake a \emph{training} regiment.
This is accomplished by minimizing some \textbf{loss function} $\bar L: \tcal \times \R^m \times \R^m \to \R_+$ such that $\bar L(t,y,y) = 0$ for any $t \in \tcal$ and $y \in \R^m$ (see, e.g., Definition 3.1 of \cite{scholkopf2002learning}), i.e., we seek the parameters $\theta^* \in \Theta$ such that
\begin{align}
\label{eq:MLopt}
\theta^* \in \argmin_{\theta \in \Theta} \mathbb{E}[\bar L(t,y^*(t),y(t,\theta))]
\end{align}
where the expectation is taken over the input $t \in \tcal$ (jointly with $y^*(t)$ is the true output includes noise). The expected loss $y(\cdot,\theta) \mapsto \mathbb{E}[\bar L(t,y^*(t),y(t,\theta))]$ is often referred to as the \textbf{risk functional} associated with the loss function $\bar L$.
(The loss $\bar L(t,y^*(t),y(t,\theta))$ can be thought of as a distance the neural network $y(t,\theta)$ is from its target value $y^*(t)$. In particular, we use the stronger convention that $\bar L(t,y^*(t),y(t,\theta)) = 0$ if and only if $y(t,\theta) = y^*(t)$.)
However, this minimizing argument need not exist in general and, furthermore, this may be a nonconvex optimization problem; we remark on these issues further within Remark~\ref{rem:training} below.
This process of minimizing the expected loss function is often called \textbf{training} the neural network and it is how the network \textbf{learns}.
Typically the risk functional is evaluated at some \emph{finite} \textbf{training points} $\{t_1,...,t_K\} \subseteq \tcal$ so that it is defined via the empirical measure, i.e., $L(y(\cdot,\theta)) := \frac{1}{K} \sum_{k = 1}^K \bar L(t_k , y^*(t_k) , y(t_k,\theta))$ for the loss function $\bar L$.

\begin{remark}\label{rem:training}
As a note of caution, the universal approximation theorem only guarantees there exists some neural network which can uniformly approximate the target.  However, there is no guarantee that we actually constructed that neural network in practice when setting hyperparameters and training the parameters.  In particular, optimizing the parameters $\theta$ by minimizing the risk functional often requires solving a highly nonlinear optimization problems. Though this problem is typically high-dimensional and nonlinear, multiple algorithms exist to efficiently \emph{approximate} $\theta^*$.  Within this work, we use the Adam optimizer~\cite{kingma2014adam} which is a gradient descent-based approach developed for large-scale problems. As a brief preview for the results of this work, we are essentially substituting the hard problem of convex vector optimization (Problem~\eqref{eq:cvop}) with the previous scalar optimization approach (Problem~\eqref{eq:MLopt}) encoded within these neural network optimization methodologies. 
\end{remark}

For the remainder of this work, to ease notation, we will drop the dependence of neural networks on their parameters $\theta$.

\section{A primal-dual neural network}\label{sec:nn}
Within this section our goal is to construct two neural networks which approximate the primal and dual solutions ($x^*(\cdot),\lambda^*(\cdot)$) to the scalarization problems~\eqref{eq:scalarization} so as to trace approximations of the entire weakly efficient frontier.  These neural networks will be constructed to guarantee primal and dual feasibility as encoded in \eqref{eq:kkt-primal} and \eqref{eq:kkt-dual}, i.e., a primal feasible neural network $x: \wcal \to \bbx$ and a dual neural network $\lambda: \wcal \to \R^M_+$. In addition, we propose a loss function to \emph{jointly} train these neural networks based on the KKT conditions of Theorem~\ref{thm:kkt}.

\subsection{Primal feasible neural network}\label{sec:nn-primal}
Herein we define a neural network with $P$ inputs (given by $w \in \wcal$) and $N$ outputs that guarantees primal feasibility~\eqref{eq:kkt-primal}.  In order to accomplish this, we want to construct the final layer of the neural network $x:\wcal \to \R^N$ so that the constraints encoded within $\bbx$ are satisfied.

Within the following proposition, we construct a projection mapping which, if used as the final activation function for an arbitrary neural network, guarantees primal feasibility.  As such, we use the general recursive structure of a neural network to guarantee that the primal neural network $x:\wcal \to \R^N$ is feasible for every $w \in \wcal$.
\begin{proposition}\label{prop:primal}
Let $z: \wcal \to \R^N$ define an arbitrary neural network.  Let $x: \wcal \to \R^N$ such that
\begin{align}
\label{eq:x} x(w) &:= (1-t^*(z(w)))z(w) + t^*(z(w))\bar x, \\
\label{eq:t} t^*(z) &:= \max_{j: g_j(z) > 0} \left\{\frac{g_j(z)}{g_j(z) - g_j(\bar x)}\right\},
\end{align}
where $t^*(z) = 0$ if $g(z) \in -\R^M_+$ and $\bar x$ is as defined in Assumption~\ref{ass:slater}.
Then $x(w) \in \bbx$ for every $w \in \wcal$.
\end{proposition}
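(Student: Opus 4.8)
The plan is to show directly that every constraint is satisfied at $x(w)$, i.e.\ that $g_j(x(w)) \le 0$ for all $j = 1,\dots,M$, which is precisely the condition $x(w) \in \bbx$. Throughout I fix an arbitrary $w \in \wcal$ and abbreviate $z := z(w)$ and $t := t^*(z)$; since the argument only ever uses the value $z$ and never the internal neural-network structure of $z$, nothing is lost in this reduction. First I would dispose of the trivial case: if $g(z) \in -\R^M_+$, then by definition $t = 0$ and $x(w) = z$, so feasibility is immediate.

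The substance is the case where $g_j(z) > 0$ for at least one index $j$. Here the first step is to verify that $t \in [0,1)$, so that $x(w) = (1-t)z + t\bar x$ is a genuine convex combination of $z$ and $\bar x$. This is where Slater's condition enters: for each index $j$ contributing to the maximum we have $g_j(z) > 0$ and, by Assumption~\ref{ass:slater}, $g_j(\bar x) < 0$, hence the denominator satisfies $g_j(z) - g_j(\bar x) > g_j(z) > 0$, so each ratio lies in $(0,1)$; taking the maximum over finitely many such indices keeps $t$ in $(0,1)$.

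With $t \in [0,1)$ established, the key step is to invoke convexity of each $g_j$ on this convex combination, giving
\[ g_j(x(w)) \le (1-t)\, g_j(z) + t\, g_j(\bar x). \]
I would then split on the sign of $g_j(z)$. For indices with $g_j(z) \le 0$, both summands are nonpositive (using $t \ge 0$, $1 - t > 0$, and $g_j(\bar x) < 0$), so the bound is $\le 0$. For indices with $g_j(z) > 0$, a short rearrangement shows that the right-hand side is nonpositive exactly when $t \ge g_j(z)/(g_j(z) - g_j(\bar x))$; but this is guaranteed, because $t$ is defined as the maximum of precisely these ratios and $j$ is one of the competing indices. Hence $g_j(x(w)) \le 0$ in every case, and $x(w) \in \bbx$.

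I do not anticipate a serious obstacle here — it is a direct verification — so the only thing to watch is the bookkeeping. The critical observation is that the definition of $t^*$ as a \emph{maximum} of the ratios is exactly the threshold making the convexity bound cancel at the tightest violated constraint while still dominating all the others; and I must confirm $t^* \in [0,1)$ so that the convex-combination step is legitimate. The one modeling subtlety worth flagging is that $t^*$ be well defined as written, i.e.\ that in the nontrivial case the maximum is taken over a nonempty finite index set (which holds since some $g_j(z) > 0$), consistent with the stated convention $t^*(z) = 0$ when $g(z) \in -\R^M_+$.
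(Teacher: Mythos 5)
Your proposal is correct and follows essentially the same route as the paper's proof: establish $t^*(z) \in [0,1)$ via the strict feasibility of $\bar x$, then apply convexity of each $g_j$ to the convex combination $(1-t^*)z + t^*\bar x$ and conclude $g_j(x(w)) \le 0$ from the definition of $t^*$ as a maximum of the ratios. The only difference is that you spell out the final inequality via the case split on the sign of $g_j(z)$, which the paper leaves implicit in the phrase ``by convexity and the definition of $t^*$.''
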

\begin{proof}
Recall by construction of the feasible set that $x(w) \in \bbx$ if and only if $g_j(x(w)) \leq 0$ for every $j = 1,...,M$.  
Consider the $j^{th}$ inequality constraint.  Note that $t^*(z) \in [0,1)$ for any $z$ by construction of $\bar x$ strictly feasible.  Therefore, by convexity and the definition of $t^*$,
\begin{align*}
g_j(x(w)) &= g_j((1-t^*(z(w))) z(w) + t^*(z(w))\bar x)\\
&\leq (1-t^*(z(w))) g_j(z(w)) + t^*(z(w)) g_j(\bar x) \leq 0.
\end{align*}
\end{proof}
We often refer to the final activation function $z \mapsto (1-t^*(z)) z + t^*(z) \bar x$ provided in Proposition~\ref{prop:primal} as a projection operator as it projects the neural network into the feasible region $\bbx$.

\begin{remark}\label{rem:tolerance}
Due to rounding errors in practice, it is recommended to add a tolerance level to the mapping $t^*$ defined in~\eqref{eq:t}.  That is, define the neural network $x: \wcal \to \bbx$ using $t^*(\cdot;\epsilon): \R^N \to [0,1)$ provided by 
\[t^*(z;\epsilon) := \max_{j: g_j(z) \geq -\epsilon} \left\{\frac{g_j(z) + \epsilon}{g_j(z) - g_j(\bar x)}\right\}\]
for fixed tolerance $\epsilon \in (0 , -\max_j g_j(\bar x))$.
\end{remark}

For the remainder of this work we assume the primal neural network is constructed so as to be feasible, i.e., as in Proposition~\ref{prop:primal} with the final activation function of $z \mapsto (1-t^*(z)) z + t^*(z) \bar x$ to a generic neural network $z: \wcal \to \R^N$.
\begin{remark}\label{rem:primal-feasible}
We propose the projection operator as a general formulation for a final activation function which guarantees primal feasibility.  If the feasible set $\bbx$ takes a standard form, then a more direct approach can be utilized instead.  For instance, within Section~\ref{sec:mean-var} below, $\bbx$ is the unit simplex; therein one could instead also guarantee primal feasibility by taking the normalized rectified linear units (i.e., $x_i(w) := z_i(w)^+ / \sum_{j = 1}^N z_i(w)^+$ for every $i$ if $\sum_{j = 1}^N z_i(w)^+ > 0$ and $x(w) = \bar{x}$ otherwise) without needing to directly eliminate the equality constraint through the use of $A_{\perp}$ and changing the input space as described in Remark~\ref{rem:convex}\eqref{rem:convex-equality}.
\end{remark}

\begin{remark}\label{rem:center0}
Due to the initialization and optimization methods typically employed for neural networks (e.g., the Adam optimizer \cite{kingma2014adam}), it can be beneficial to consider the modified CVOP
\[\min\{f(x+\bar x) \; | \; g_j(x+\bar x) \leq 0,~j = 1,...,M\}.\]
For this modified problem, with objective $\hat f(\cdot) := f(\cdot + \bar x)$ and constraints $\hat g(\cdot) := g(\cdot + \bar x)$, we can select the strictly feasible point $\hat{\bar x} := {\bf 0}$.
\end{remark}

\subsection{Dual feasible neural networks}\label{sec:nn-dual}
Herein we define the dual neural network with $P$ inputs (given by $w \in \wcal$) and $M$ outputs which guarantees dual feasibility~\eqref{eq:kkt-dual}.  Similar to the approach taken for the primal neural network, we want to construct the final layer of this neural network so that feasibility is guaranteed.

Specifically, feasibility for the dual variables associated with the inequality constraints requires non-negativity~\eqref{eq:kkt-dual}.  This can be accomplished by simply applying the rectified linear unit as introduced in Example~\ref{ex:activation} (or the smooth approximation of the positive function) as a final activation function to a neural network with $M$ outputs already, i.e., $\lambda(w) := \tilde \lambda(w)^+$ for neural network $\tilde \lambda: \wcal \to \R^M$.

\subsection{Loss function}\label{sec:nn-loss}

In order to train the parameters of our two neural networks, we need to also construct a loss function which we seek to minimize.  In particular, we design a single loss function to jointly train both neural networks simultaneously.  To accomplish this, we will use the KKT conditions presented in Theorem~\ref{thm:kkt}.

Specifically, to find the parameters for our neural networks, we select $K$ (representative) choices of the scalarization weights $w_k \in \wcal$ for $k = 1,...,K$\footnote{As clarified in the subsequent case studies, these scalarization weights are chosen either via a regular grid or sampled uniformly within $\wcal$.} and seek to minimize the deviation our neural networks have from the KKT conditions.  Due to the design of our neural networks $(x(\cdot),\lambda(\cdot))$ as presented above, the primal and dual feasibility conditions~\eqref{eq:kkt-primal} and~\eqref{eq:kkt-dual} are automatically satisfied and, as such, do not need to be considered in our loss function.  Therefore, the deviation from the KKT conditions can be fully characterized by the norms of the right-hand sides of the first order condition~\eqref{eq:kkt} and complimentary slackness condition~\eqref{eq:kkt-cs}. As these errors may be of different orders of magnitude, we choose to weight the error in the complimentary slackness condition by $\eta > 0$.  That is, our empirical risk functional $L$ for jointly training the two neural networks is provided by
\begin{equation}\label{eq:loss}
L(x(\cdot),\lambda(\cdot);\eta) := \frac{1}{K} \sum_{k = 1}^K \left(\begin{array}{l} \|Jf(x(w_k))^\T w_k + Jg(x(w_k))^\T \lambda(w_k)\|_2^2 \\ \quad + \eta \|\diag(\lambda(w_k)) g(x(w_k))\|_2^2\end{array}\right).
\end{equation}
\begin{remark}\label{rem:tolerance-loss}
\begin{enumerate}
\item The loss associated with the complimentary slackness conditions may be modified to account for the rounding errors as discussed in Remark~\ref{rem:tolerance}.  For instance, for tolerance $\epsilon > 0$, we can consider the complimentary slackness error to be $\|\diag(\lambda(w_k)) \diag(\ind_{g_j(x(w_k)) \leq -\epsilon}) g(x(w_k))\|_2^2$ with indicator function $\ind_{(\cdot)}$ taking value 1 if the argument is true and 0 otherwise.  In this way, we treat any solution within $\epsilon$ of the boundary to be sitting on the boundary when considering complimentary slackness.
\item The risk functional $L$ requires an extra hyperparameter $\eta > 0$ to provide a relative weight for the complimentary slackness condition. In practice, we choose 
\[\eta \approx \mathbb{E}[\|JF(x_0(w))^\T w + Jg(x_0(w))^\T \lambda_0(w)\|_2^2]/\mathbb{E}[\|\diag(\lambda_0(w)) g(x_0(w))\|_2^2]\] 
with expectations taken over the scalarizations $w$ and $(x_0(w),\lambda_0(w))$ following the initialization of, e.g., the stochastic gradient descent.
In this way, the initial optimization directions for the neural networks will attempt to minimize both sources of error.
\end{enumerate}
\end{remark}

\begin{remark}\label{rem:dualgap}
We wish to note that the KKT condition based risk functional and loss function~\eqref{eq:loss} introduced above, and used throughout the numerical examples herein, is not the only loss function that can be taken for this problem.  As Assumption~\ref{ass:slater} implies strong duality, the duality gap can be used as a loss function instead via
\[L(x(\cdot),\lambda(\cdot)) := \frac{1}{K} \sum_{k = 1}^K \left[w_k^\T f(x(w_k)) - d(\lambda(w_k),w_k)\right],\]
where $d: \R^M \times \wcal \to \R$ is the dual function provided in~\eqref{eq:dual}.
Notably, this construction removes the direct need to jointly train the primal and dual neural networks as the primal neural network could be trained with risk functional $L_p(x(\cdot)) := \frac{1}{K} \sum_{k = 1}^K w_k^\T f(x(w_k))$ and the dual neural network could be trained with risk functional $L_d(\lambda(\cdot)) := -\frac{1}{K} \sum_{k = 1}^K d(\lambda(w_k),w_k)$ to achieve the same effect.

Though separating the risk functionals so that the primal and dual neural networks can be trained separately is tempting due to the intuitive construction of these objectives, we find that this construction performs worse than the joint KKT based risk functional. Due to the use of the first order condition~\eqref{eq:kkt} in \eqref{eq:loss}, the KKT based risk functional inherently incorporates some elements of sensitivity analysis to improve performance locally around the training data; in contrast, $L_p,L_d$ only seek to optimize at the training data without any additional information to interpolate between the training data.\footnote{This can be seen in, e.g., Figure~\ref{fig:2obj-efficient} below in which we compare the neural network solutions under \eqref{eq:loss} against directly computing the solutions to~\eqref{eq:scalarization} at the training scalarizations.}
\end{remark}

\section{Approximating the weakly efficient frontier}\label{sec:approx}
Within this section we present a method to formally use the neural networks constructed in Section~\ref{sec:nn} to construct inner and outer approximations of the weakly efficient frontier. 

Before formally providing the results on the inner and outer approximations, we wish to recall the (Lagrange) dual function~\eqref{eq:dual} to the scalarization problem~\eqref{eq:scalarization}.  That is, we consider the mapping $d: \R^M \times \wcal \to \R$ defined by
\begin{equation*}
d(\lambda,w) := \inf_{x \in \R^N} \left[w^\T f(x) + \lambda^\T g(x)\right].
\end{equation*}
The dual problem is then to maximize this dual function subject to the dual feasibility constraints~\eqref{eq:kkt-dual} as provided in~\eqref{eq:scalarization-dual}.

The first main result of this section, to prove that the primal neural network can be utilized to provide an inner approximation and the dual neural network can be utilized to provide an outer approximation of the upper set, is formalized in Lemma~\ref{lemma:approx}. Notably, this result is totally reliant on, and trivially follows from, the feasibility of these neural networks as guaranteed by the final activation functions presented in Section~\ref{sec:nn} above.
\begin{lemma}\label{lemma:approx}
The primal neural network $x: \wcal \to \bbx$ provides an inner approximation of the upper set $\pcal$ and the dual neural network $\lambda: \wcal \to \R^M_+$ provide an outer approximation of the upper set $\pcal$ via the relation
\[\cl\co\bigcup_{w \in \wcal}\left[f(x(w)) + \R^P_+\right] \subseteq \pcal \subseteq \bigcap_{w \in \wcal} \left\{y \in \R^P \; | \; w^\T y \geq d(\lambda(w);w)\right\}.\]
\end{lemma}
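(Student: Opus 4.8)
The plan is to prove the two set inclusions separately, with each following directly from the feasibility guarantees established in Section~\ref{sec:nn}. The key observation is that the upper image $\pcal = \cl(f[\xcal] + \R^P_+)$ is by definition a closed convex set (since the scalarizations trace out a convex upper image for CVOPs), so it is determined by its supporting halfspaces, and it contains every feasible objective value shifted by the ordering cone.

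For the \textbf{inner approximation} (left inclusion), I would argue as follows. Since the primal neural network is constructed via Proposition~\ref{prop:primal} to satisfy $x(w) \in \bbx$ for every $w \in \wcal$, each point $f(x(w))$ lies in $f[\bbx]$. Now $f[\bbx] \subseteq \pcal$: indeed any feasible objective value is weakly dominated by itself, so $f(x(w)) \in f[\bbx] \subseteq \pcal$, whence $f(x(w)) + \R^P_+ \subseteq \pcal$ because $\pcal$ is upward-closed (it contains $f[\xcal] + \R^P_+$ and hence is stable under adding elements of $\R^P_+$). Taking the union over all $w \in \wcal$ gives $\bigcup_{w \in \wcal}[f(x(w)) + \R^P_+] \subseteq \pcal$. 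Finally, since $\pcal$ is closed and convex, it contains the closed convex hull of any subset it contains, yielding the desired $\cl\co\bigcup_{w \in \wcal}[f(x(w)) + \R^P_+] \subseteq \pcal$.

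For the \textbf{outer approximation} (right inclusion), I would use weak duality together with the dual feasibility of $\lambda(\cdot)$. Fix any $w \in \wcal$ and any $y \in \pcal$. It suffices to show $w^\T y \geq d(\lambda(w),w)$. First, for any feasible $x \in \bbx$ we have $g(x) \leq {\bf 0}$ and $\lambda(w) \geq {\bf 0}$ (the latter guaranteed by the ReLU final activation of Section~\ref{sec:nn-dual}), so $w^\T f(x) \geq w^\T f(x) + \lambda(w)^\T g(x) \geq \inf_{x' \in \R^N}[w^\T f(x') + \lambda(w)^\T g(x')] = d(\lambda(w),w)$. This shows $w^\T f(x) \geq d(\lambda(w),w)$ for all $x \in \bbx$, i.e.\ the halfspace $\{y : w^\T y \geq d(\lambda(w),w)\}$ contains $f[\bbx]$. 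Because $w \in \R^P_+$, the linear functional $y \mapsto w^\T y$ is nondecreasing along $\R^P_+$, so the halfspace also contains $f[\bbx] + \R^P_+$ and, being closed, its closure and convex hull, hence all of $\pcal$. Intersecting over $w \in \wcal$ gives $\pcal \subseteq \bigcap_{w \in \wcal}\{y : w^\T y \geq d(\lambda(w),w)\}$.

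I do not expect a genuine obstacle here, since, as the authors note, the result follows trivially from feasibility; the only points requiring mild care are (i) justifying that $\pcal$ is upward-closed and closed-convex so that passing to $\cl\co$ is legitimate on the left, and (ii) ensuring the direction of weak duality inequality is correctly oriented for a minimization problem on the right. The essential ingredients are purely the primal feasibility $x(w)\in\bbx$, the dual feasibility $\lambda(w) \geq {\bf 0}$, and the definition of $d$ as an infimum over all of $\R^N$ (so that $d(\lambda(w),w) \leq w^\T f(x) + \lambda(w)^\T g(x)$ for every $x$). No appeal to strong duality is needed for the inclusions themselves; strong duality only becomes relevant later when bounding the gap between the inner and outer approximations.
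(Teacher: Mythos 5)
Your \textbf{outer} inclusion is correct, and in fact slightly more careful than the paper's own argument. The paper proceeds by picking, for each $y \in \pcal$, a scalarization solution $x^*(\tilde w)$ with $y - f(x^*(\tilde w)) \in \R^P_+$ and then chaining inequalities through strong duality (while noting parenthetically that weak duality suffices). You instead show directly, via weak duality and the dual feasibility $\lambda(w) \in \R^M_+$, that each halfspace $\{y \in \R^P \; | \; w^\T y \geq d(\lambda(w),w)\}$ contains $f[\bbx]$, hence $f[\xcal] + \R^P_+$ (since $w \in \R^P_+$), hence its closure $\pcal$. This avoids the domination-type selection of $\tilde w$ (which itself needs a small continuity argument for closure points of $\pcal$) and makes explicit that only weak duality is used.

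The \textbf{inner} inclusion, however, has a genuine gap at the step ``$f(x(w)) \in f[\bbx] \subseteq \pcal$ \dots\ because any feasible objective value is weakly dominated by itself.'' With the paper's definition $\pcal = \cl(f[\xcal] + \R^P_+)$, where $\xcal$ is the set of \emph{weak minimizers}, being dominated by itself only places $f(x(w))$ in $f[\bbx] + \R^P_+$, not in $f[\xcal] + \R^P_+$: the network output $x(w)$ is merely feasible and in general not weakly efficient. The inclusion $f[\bbx] \subseteq \pcal$ is precisely the external stability (domination) property --- every feasible value is dominated, up to closure, by a weakly efficient one --- which is a theorem requiring proof under the standing assumptions, not a consequence of the definition. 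The paper circumvents this entirely: for $y$ in the union it shows $w^\T y \geq w^\T f(x(\tilde w)) \geq \inf\{w^\T f(x) \; | \; x \in \bbx\}$ for \emph{every} $w \in \wcal$, and by Proposition~\ref{prop:scalar} this infimum is attained at a weak minimizer $x^*(w)$, so it is the support value of the closed convex upper set $\pcal$ in direction $w$; since every supporting halfspace of an upper set has normal in $\R^P_+$, the separating hyperplane theorem yields $\pcal = \bigcap_{w \in \wcal}\{y \in \R^P \; | \; w^\T y \geq \inf_{x \in \bbx} w^\T f(x)\}$, which contains the union and therefore its closed convex hull. Your overall skeleton survives, but the one-line justification of $f[\bbx] \subseteq \pcal$ must be replaced by this scalarization/supporting-halfspace argument (or by an explicit proof of external stability).
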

\begin{proof}
We will prove this result by showing, first, the inner approximation and, second, the outer approximation.
Let $y \in \bigcup_{w \in \wcal} [f(x(w)) + \R^P_+]$.  There exists some $\tilde w \in \wcal$ such that $y - f(x(\tilde w)) \in \R^P_+$.  Therefore, as $x^*(\tilde w)\in\bbx$ by Proposition~\ref{prop:primal}, for any $w \in \wcal$, it holds
\[w^\T y \geq w^\T f(x(\tilde w)) \geq \inf\{w^\T f(x) \; | \; x \in \bbx\}.\]
By Proposition~\ref{prop:scalar}, and noting that the upper set $\pcal$ is closed and convex by assumption, the inner approximation holds.
Now, let $y \in \pcal$.  There exists some $\tilde w \in \wcal$ such that $y - f(x^*(\tilde w)) \in \R^P_+$ for solution $x^*: \wcal \to \bbx$ to \eqref{eq:scalarization}.  Therefore, for any $w \in \wcal$
\begin{align*}
w^\T y &\geq w^\T f(x^*(\tilde w)) \geq \inf\{w^\T f(x) \; | \; x \in \bbx\} \\ 
&= \sup\{d(\lambda,w) \; | \; \lambda \in \R^M_+\} \geq d(\lambda(w),w),
\end{align*}
where the equality holds by strong duality.  
(Though we utilize strong duality here, in fact only weak duality is required.)
As a direct consequence, the outer approximation holds.
\end{proof}

Beyond defining the inner and outer approximations as done in Lemma~\ref{lemma:approx}, we want to also quantify the error from these approximations. To do this, we first wish to define a variable approximation error.  This is accomplished by quantifying the approximation error in any scalarization direction.  As opposed to the standard, constant, approximation error (which we discuss more in Proposition~\ref{prop:defn-approx}) this variable approximation error allows us to more accurately assess the local errors coming from our machine learning approach.  As far as the authors are aware, this definition is novel to the literature.
\begin{definition}\label{defn:approx}
$\tilde\pcal \subseteq \R^P$ is an \textbf{\emph{$\epsilon(\cdot)$-inner approximation}} of~\eqref{eq:cvop} for the function $\epsilon: \wcal \to \R_+$ if
\[\tilde\pcal \subseteq \pcal \subseteq \bigcap_{w \in \wcal} \cl\left[\tilde\pcal - \epsilon(w){\bf 1} + G(w)\right]\]
for the upper set $\pcal$, and where $G(w) := \{y \in \R^P \; | \; w^\T y \geq 0\}$ for any $w \in \wcal$.
\end{definition}
Notably, the quantification of the errors in the $\epsilon(\cdot)$-inner approximation are variable.  In contrast, typically (see, e.g., \cite{LRU14}) a constant error $\epsilon \in \R_{++}$ is taken instead; in such a setting an $\epsilon$-inner approximation $\tilde\pcal \subseteq \R^P$ of $\pcal$ is defined by the relations
\[\tilde\pcal \subseteq \pcal \subseteq \tilde\pcal - \epsilon{\bf 1}.\]
In the following proposition we show that when the error function is constant, Definition~\ref{defn:approx} reduces to the classical definition of an approximation.
\begin{proposition}\label{prop:defn-approx}
Let $\tilde\pcal \subseteq \R^P$ be a closed and convex upper set and let $\epsilon:\wcal \to \R_+$ be a constant function, i.e., $\epsilon(w) \equiv \epsilon_0 > 0$ for every $w \in \wcal$.  $\tilde\pcal$ is an $\epsilon(\cdot)$-inner approximation of $\pcal \subseteq \R^P$ if and only if $\tilde\pcal \subseteq \pcal \subseteq \tilde\pcal - \epsilon_0 {\bf 1}$.
\end{proposition}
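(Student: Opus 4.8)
The plan is to exploit that both characterizations share the inclusion $\tilde\pcal \subseteq \pcal$, so it suffices to show that, for the constant error $\epsilon_0$, the two outer inclusions coincide. Concretely, I would establish the set identity
\[\bigcap_{w\in\wcal}\cl\left[\tilde\pcal - \epsilon_0{\bf 1} + G(w)\right] = \tilde\pcal - \epsilon_0 {\bf 1}.\]
Writing $Q := \tilde\pcal - \epsilon_0{\bf 1}$, note that $Q$ is again a closed convex upper set, being a translate of $\tilde\pcal$. Once this identity holds, the condition $\pcal \subseteq \bigcap_{w}\cl[\tilde\pcal - \epsilon_0{\bf 1} + G(w)]$ from Definition~\ref{defn:approx} reads exactly as $\pcal \subseteq \tilde\pcal - \epsilon_0{\bf 1}$, and the claimed equivalence follows.

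The first step is to identify each set appearing in the intersection as a half-space. I would show
\[\cl[Q + G(w)] = \{y \in \R^P \; | \; w^\T y \geq \sigma(w)\}, \qquad \sigma(w) := \inf_{q\in Q} w^\T q.\]
The inclusion $\subseteq$ is immediate, since $w^\T(q+h) \geq \sigma(w) + 0$ for all $q \in Q$ and $h \in G(w)$, and the right-hand half-space is closed. For $\supseteq$, given $y$ with $w^\T y \geq \sigma(w)$ I would pick, for each $\delta>0$, some $q\in Q$ with $w^\T q < \sigma(w) + \delta$; then $h := y - q + \tfrac{\delta}{\|w\|^2}w$ satisfies $w^\T h > 0$, so $h \in G(w)$, and $q + h = y + \tfrac{\delta}{\|w\|^2}w \in Q + G(w)$. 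Letting $\delta \downarrow 0$ places $y$ in $\cl[Q+G(w)]$. This $\delta$-argument is precisely what absorbs the possibility that $\sigma(w)$ is not attained on the boundary of $\wcal$.

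The second step is to intersect over $w\in\wcal$ and invoke the supporting-halfspace (dual) representation of the closed convex set $Q$. Because $Q$ is an upper set with respect to $\R^P_+$, one has $\sigma(a) = -\infty$ whenever $a \notin \R^P_+$ (if $a_i < 0$, adding larger and larger multiples of the $i$th unit vector keeps a point in $Q$ while driving $a^\T q \to -\infty$). Hence only normals $a \in \R^P_+\setminus\{{\bf 0}\}$ contribute to the representation $Q = \bigcap_a \{y \; | \; a^\T y \geq \sigma(a)\}$, and by positive homogeneity of $\sigma$ these may be normalized to ${\bf 1}^\T a = 1$, i.e.\ to $w \in \wcal$. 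Boundedness of~\eqref{eq:cvop} (Assumption~\ref{ass:slater}), which forces $\pcal$ and hence $Q$ to be bounded below, guarantees $\sigma$ is finite on the relevant directions. Combining the two steps gives $\bigcap_{w\in\wcal}\cl[Q+G(w)] = \bigcap_{w\in\wcal}\{y \; | \; w^\T y \geq \sigma(w)\} = Q$, as required.

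I expect the main obstacle to be the reverse inclusion $\bigcap_{w}\cl[Q+G(w)] \subseteq Q$: this is the only place that genuinely uses closedness and convexity of $\tilde\pcal$, via the supporting-hyperplane theorem, together with the upper-set structure to legitimately restrict the relevant supporting normals down to the simplex $\wcal$. By contrast, the forward inclusion $Q \subseteq \bigcap_{w}\cl[Q+G(w)]$ is routine, following at once from ${\bf 0} \in G(w)$ (so $Q \subseteq Q + G(w)$) and the closedness of $Q$.
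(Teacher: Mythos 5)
Your proof is correct and takes essentially the same route as the paper: the paper's own one-line proof reduces the claim to exactly your key identity $\tilde\pcal - \epsilon_0{\bf 1} = \bigcap_{w \in \wcal}\left[\tilde\pcal - \epsilon_0{\bf 1} + G(w)\right]$, which it attributes to the separating hyperplane theorem. Your two-step argument (identifying each $\cl\left[\tilde\pcal - \epsilon_0{\bf 1} + G(w)\right]$ as a support half-space, then using separation with the upper-set structure to restrict normals to $\R^P_+$ and normalize them to $\wcal$) merely supplies the details the paper declares trivial.
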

\begin{proof}
As $\tilde\pcal$ is a closed and convex upper set, it trivially follows (by the separating hyperplane theorem) that
\[\tilde\pcal - \epsilon_0 {\bf 1} = \bigcap_{w \in \wcal}[\tilde\pcal - \epsilon_0{\bf 1} + G(w)]\]
and the result is proven.
\end{proof}

Within Corollary~\ref{cor:approx} below, we provide an analytical form for the functional approximation error $\epsilon(\cdot)$ for the inner approximation defined within Lemma~\ref{lemma:approx}.  Specifically, this function depends on both the primal and dual neural networks to quantify an upper bound to the ``distance'' between the inner and outer approximations at any point on the weakly efficient frontier.
\begin{corollary}\label{cor:approx}
The primal neural network $x: \wcal \to \bbx$ provides an $\epsilon(\cdot)$-inner approximation $\cl\co\bigcup_{w \in \wcal} [f(x(w)) + \R^P_+]$ of the upper set $\pcal$ with $\epsilon: \wcal \to \R_+$ defined by
\[\epsilon(w) := w^\T f(x(w)) - d(\lambda(w),w)\]
for any $w \in \wcal$ based, additionally, on the dual neural network $\lambda: \wcal \to \R^M_+$.
\end{corollary}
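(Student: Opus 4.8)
The plan is to verify the two inclusions demanded by Definition~\ref{defn:approx} for the candidate set $\tilde\pcal := \cl\co\bigcup_{w \in \wcal}[f(x(w)) + \R^P_+]$ together with the stated function $\epsilon(\cdot)$. The first inclusion, $\tilde\pcal \subseteq \pcal$, is already established as the inner approximation in Lemma~\ref{lemma:approx}, so no fresh work is needed there; the entire content of the corollary is the second inclusion $\pcal \subseteq \bigcap_{w \in \wcal}\cl\left[\tilde\pcal - \epsilon(w){\bf 1} + G(w)\right]$.

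For that inclusion I would fix an arbitrary $w \in \wcal$ and produce a convenient lower bound on the set $\tilde\pcal - \epsilon(w){\bf 1} + G(w)$. The key observation is that the single point $f(x(w))$ (indeed the whole set $f(x(w)) + \R^P_+$) lies in $\tilde\pcal$, and that $\R^P_+ \subseteq G(w)$ since $w \geq {\bf 0}$; because $G(w)$ is a half-space through the origin (hence a convex cone with $G(w) + \R^P_+ = G(w)$), this yields
\[\tilde\pcal - \epsilon(w){\bf 1} + G(w) \supseteq f(x(w)) - \epsilon(w){\bf 1} + G(w) = \{y \in \R^P \; | \; w^\T y \geq w^\T f(x(w)) - \epsilon(w)\},\]
where the final equality uses the normalization ${\bf 1}^\T w = 1$, so that the $\epsilon(w){\bf 1}$-shift lowers the supporting level by exactly $\epsilon(w)$.

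The decisive step, and the only place the specific form of $\epsilon(\cdot)$ enters, is to substitute $\epsilon(w) = w^\T f(x(w)) - d(\lambda(w),w)$, so that the supporting level $w^\T f(x(w)) - \epsilon(w)$ collapses precisely to $d(\lambda(w),w)$. The lower-bounding half-space is therefore exactly $\{y \in \R^P \mid w^\T y \geq d(\lambda(w),w)\}$, which is the very outer-approximation half-space appearing in Lemma~\ref{lemma:approx}. Since that lemma gives $\pcal \subseteq \{y \mid w^\T y \geq d(\lambda(w),w)\}$ for every $w$, we obtain $\pcal \subseteq \tilde\pcal - \epsilon(w){\bf 1} + G(w)$, hence a fortiori $\pcal \subseteq \cl\left[\tilde\pcal - \epsilon(w){\bf 1} + G(w)\right]$; intersecting over all $w \in \wcal$ closes the argument. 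I do not anticipate a genuine obstacle: the result is essentially a restatement of Lemma~\ref{lemma:approx} once one recognizes that the (weak-)duality gap $\epsilon(w)$ is calibrated so that shifting the supporting hyperplane of $\tilde\pcal$ in direction $w$ by $\epsilon(w){\bf 1}$ lands it exactly on the dual supporting value $d(\lambda(w),w)$. The only minor care needed is the closure/Minkowski-sum bookkeeping, which is harmless here because containment of $\pcal$ in the un-closed sum already implies containment in its closure.
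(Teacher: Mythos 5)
Your proposal is correct and follows essentially the same route as the paper: both reduce the set $\tilde\pcal - \epsilon(w){\bf 1} + G(w)$ to the half-space $\{y \in \R^P \; | \; w^\T y \geq w^\T f(x(w)) - \epsilon(w)\}$ using $f(x(w)) \in \tilde\pcal$ and ${\bf 1}^\T w = 1$, observe that the choice of $\epsilon(w)$ makes this exactly the outer-approximation half-space $\{y \in \R^P \; | \; w^\T y \geq d(\lambda(w),w)\}$, and then invoke Lemma~\ref{lemma:approx} for both inclusions. The only cosmetic difference is that the paper writes $\cl[\tilde\pcal - \epsilon(w){\bf 1} + G(w)]$ exactly as a half-space with level $\inf_{\tilde y \in \tilde\pcal} w^\T \tilde y - \epsilon(w)$ before bounding, whereas you lower-bound the un-closed sum directly via the single point $f(x(w))$; the substance is identical.
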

\begin{proof}
Let $\tilde\pcal := \cl\co\bigcup_{w \in \wcal} [f(x(w)) + \R^P_+]$.  It immediately follows that
\begin{align*}
\cl[\tilde\pcal - \epsilon(w) {\bf 1} + G(w)] &= \left\{y \in \R^P \; | \; w^\T y \geq \inf_{\tilde y \in \tilde\pcal} w^\T [\tilde y - \epsilon(w){\bf 1}]\right\}\\
&= \left\{y \in \R^P \; | \; w^\T y \geq \inf_{\tilde y \in \tilde\pcal} w^\T \tilde y - \epsilon(w)\right\}\\
&\supseteq \left\{y \in \R^P \; | \; w^\T y \geq w^\T f(x(w)) - \epsilon(w)\right\}\\
&= \left\{y \in \R^P \; | \; w^\T y \geq d(\lambda(w),w)\right\}.
\end{align*}
The result trivially follows from Lemma~\ref{lemma:approx}.
\end{proof}

\section{Numerical case studies}\label{sec:cs}
Within this section we will consider four multiobjective problems to numerically study the neural network approach presented above.  The first problem (Section~\ref{sec:2obj}) has 2 objective functions but many decision variables and constraints. In the second problem (Section~\ref{sec:many}), we investigate the performance of the proposed methodology as the number of objectives grow. The third problem (Section~\ref{sec:high}) provides an additional high-dimensional problem so that we can investigate the impact of training time on the performance of the methodology.  The final problem (Section~\ref{sec:mean-var}) presents the mean-variance optimization problem~\cite{markowitz1952} over a large number of assets.  
All computations of the neural networks were completed with PyTorch on a local machine using the Adam optimizer~\cite{kingma2014adam} with learning rate $10^{-4}$. 
Throughout these examples, to generate the loss function, we weight the complimentary slackness condition~\eqref{eq:kkt-cs} with $\eta = 10$.
We wish to note that all hyperparameters -- except the terminal activation functions as presented in Section~\ref{sec:nn} -- are chosen arbitrarily and were not found via, e.g., a grid search. 
All intermediate activation functions (i.e., all but the terminal activation functions) are chosen to be the hyperbolic tangent function with linear linking between layers.

\subsection{Two objective problem}\label{sec:2obj}
Consider the following problem with $P = 2$ objectives:
\begin{align*}
\min\{(\|x\|_2^2 , \|x-{\bf 2}\|_2^2)^\T/N \; | \; x \in [0,1]^N\}.
\end{align*}
Note that this problem was considered as ``Test Instance 2'' in~\cite{eichfelder2021approximation}.
Due to the symmetric structure of this problem, the true unique minimizer for any scalarization $w \in \wcal$ can trivially be deduced as $x^*: \wcal \to [0,1]^N$ defined by
\[x^*(w) := \begin{cases} 2w_2{\bf 1} &\text{if } w_2 \leq \frac{1}{2} \\ {\bf 1} &\text{if } w_2 > \frac{1}{2} \end{cases}.\]
As this problem has a closed form set of minimizers, the efficient frontier can be exactly provided as $\{(\|x^*(w)\|_2^2,\|x^*(w)-2\|_2^2)^\T/N \; | \; w \in \wcal\}$.  This permits us to present an exact comparison between the machine learning methodology to the ground truth.

In order to fully consider our outer approximation as presented in Lemma~\ref{lemma:approx}, we need to also discuss the Lagrange dual problem of the weighted-sum scalarizations.
Let us encode the box constraints through the linear inequalities $Ax \leq b$ for $A := (I , -I)^\T \in \R^{N \times 2N}$ and $b := ({\bf 1}^\T , {\bf 0}^\T)^\T \in \R^{2N}$.  In such a way we consider $M = 2N$ linear inequality constraints.
Due to the quadratic structure of the scalarizations, the Lagrange dual function $d: \R^M \times \wcal \to \R$ can be directly computed by the quadratic structure
\[d(\lambda,w) := -\frac{N}{4}\lambda^\T A A^\T \lambda + (2w_2 A{\bf 1} - b)^\T \lambda + 4w_1w_2.\]

With this setup, we can consider a specific instantiation of the problem.  In particular, for test purposes, we will simply consider this bi-objective problem with $N = 40$ decision variables.  As a direct consequence of the box constraints, there are $M = 80$ inequality constraints.\footnote{The neural network was considered with objective function $x \mapsto N f(x)$ as, numerically, the training process was found to discount the loss associated with the objective too heavily otherwise.}
Furthermore, to demonstrate the power of the neural network methodology proposed above, we train the primal and dual neural networks with only 4 scalarizations: $w \in \{(0,1)^\T,(\frac{1}{3},\frac{2}{3})^\T,(\frac{2}{3},\frac{1}{3})^\T,(1,0)^\T\}$.  Testing will be undertaken on the dense grid $w \in \{(i/1000,1-i/1000) \; | \; i \in \{0,1,...,1000\}\}$.
The primal neural network $x: \wcal \to \bbx$ is designed with three hidden layers, each with 800 hidden nodes; the terminal (projection) activation function is considered with a tolerance of $5 \times 10^{-5}$ to guarantee primal feasibility as discussed in Remark~\ref{rem:tolerance} and with strictly feasible point $\bar x := \frac{1}{2}\times{\bf 1}$.  The dual neural network $\lambda: \wcal \to \bbr^{80}_+$ is designed with three hidden layers, each with 1600 hidden nodes.
All intermediate activation functions (i.e., all but the terminal activation functions) are chosen to be the hyperbolic tangent function with linear linking between layers.
Finally, we train these networks over 1000 epochs. 

\begin{figure}
\centering
\begin{subfigure}[t]{0.45\textwidth}
\centering
\includegraphics[width=\textwidth]{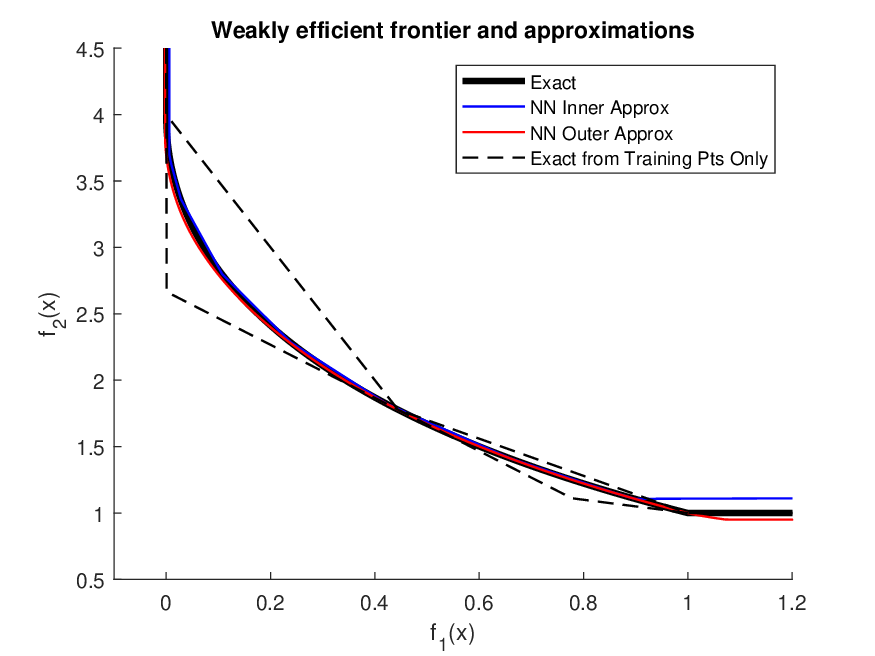}
\caption{Plot of weak efficient frontier (black) and approximations through neural networks (blue and red) or direct computation (black dashed).} 
\label{fig:2obj-efficient}
\end{subfigure}
~
\begin{subfigure}[t]{0.45\textwidth}
\centering
\includegraphics[width=\textwidth]{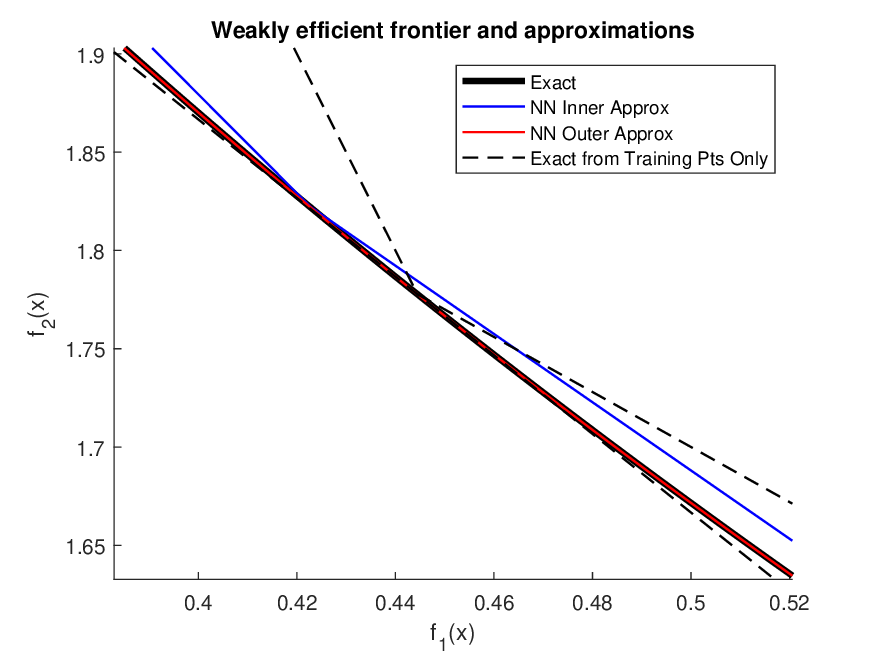}
\caption{Zoomed in plot of the weak efficient frontier (black) and approximations through neural networks (blue and red) or direct computation (black dashed).} 
\label{fig:2obj-efficient-zoom}
\end{subfigure}
~
\begin{subfigure}[t]{0.45\textwidth}
\centering
\includegraphics[width=\textwidth]{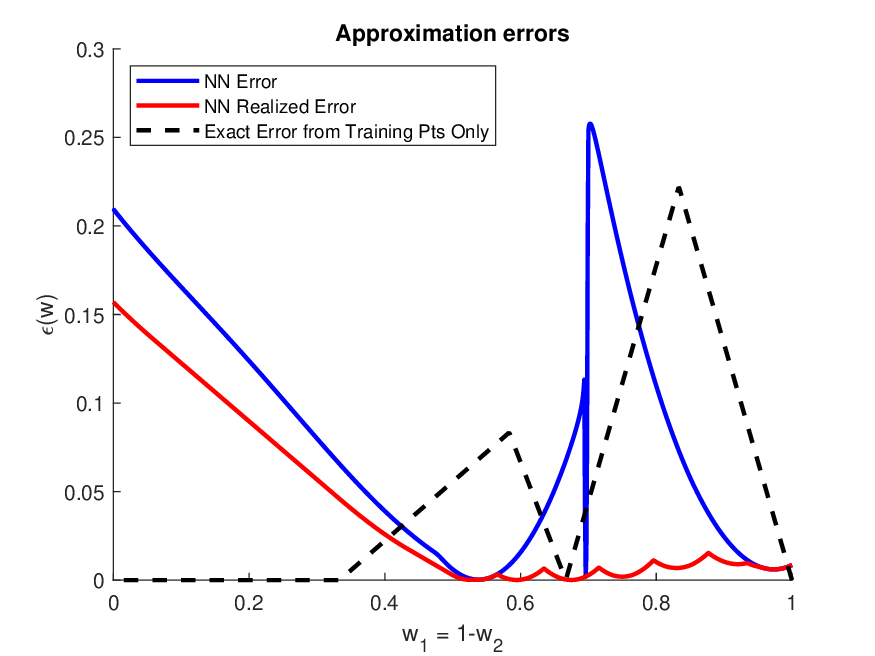}
\caption{Plot of approximation errors $\epsilon(\cdot)$ in linear-scale so that the neural network (blue), realized approximations (red), and direct computation (black dashed) provide $\epsilon(\cdot)$-inner approximations.}
\label{fig:2obj-epsilon-linearscale}
\end{subfigure}
~
\begin{subfigure}[t]{0.45\textwidth}
\centering
\includegraphics[width=\textwidth]{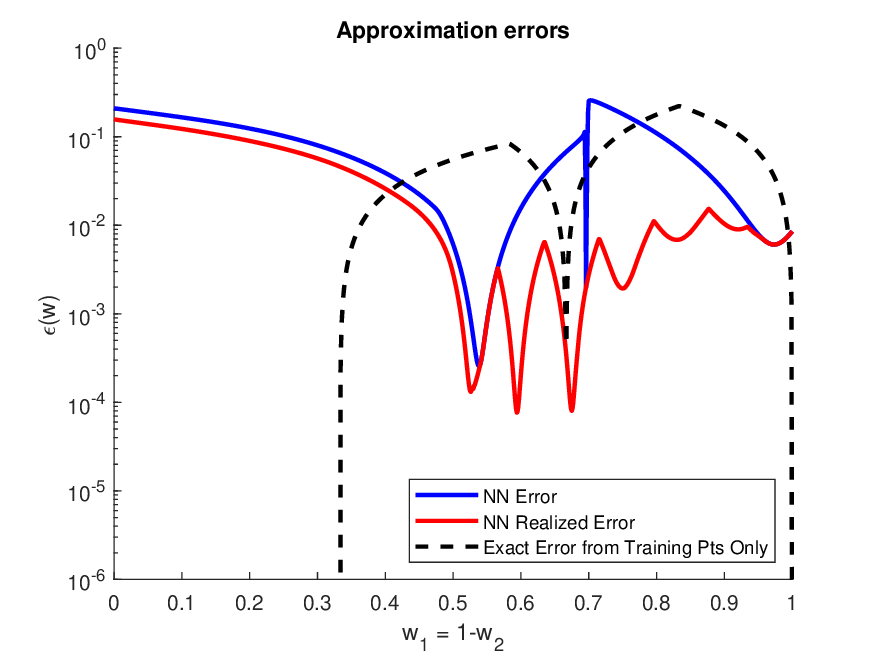}
\caption{Plot of approximation errors $\epsilon(\cdot)$ in log-scale so that the neural network (blue), realized approximations (red), and direct computation (black dashed) provide $\epsilon(\cdot)$-inner approximations.}
\label{fig:2obj-epsilon}
\end{subfigure}
\caption{Section~\ref{sec:2obj}: Plot of the efficient frontier and approximation errors with 1001 regularly spaced test scalarizations. Neural networks and direct computation are computed with the same training scalarizations $w \in \{(0,1)^\T,(\frac{1}{3},\frac{2}{3})^\T,(\frac{2}{3},\frac{1}{3})^\T,(1,0)^\T\}$.}
\label{fig:2obj}
\end{figure}
Figure~\ref{fig:2obj-efficient} displays the true weak efficient frontier along with two approaches for approximation.  The true weak efficient frontier is plotted with a solid black curve.  The inner approximation provided by the primal neural network (i.e., the boundary $\bd\cl\co\bigcup_{w \in \wcal} [f(x(w)) + \R^P_+]$) is plotted as a solid blue line and the outer approximation provided by the dual neural network (i.e., the boundary $\bd\bigcap_{w \in \wcal} \{y \in \R^P \; | \; w^\T y \geq d(\lambda(w),w)\}$) is plotted as a solid red line.  These machine learning approximations are compared with the direct computation of the primal and dual solutions for the 4 training scalarizations as dashed black lines.  Immediately noticeable, the neural network approximations almost completely overlap with each other except in the south east corner of the figure (i.e., for scalarizations $w \in \wcal$ with $w_1$ low).  The approximations can be more clearly seen in Figure~\ref{fig:2obj-efficient-zoom} in which the area around the training scalarization $w = (\frac{2}{3},\frac{1}{3})^\T$ is highlighted.
The errors $\epsilon(w)$ can be directly seen in Figures~\ref{fig:2obj-epsilon-linearscale} and~\ref{fig:2obj-epsilon}; these figures display the exact same data but in a linear and logarithmic scale respectively.  For these scalarizations the error for the exact computation drops to 0, whereas the neural network has errors growing to around 0.2. These errors are further improved (see the red line ``NN Realized Error'' in Figures~\ref{fig:2obj-epsilon-linearscale} and~\ref{fig:2obj-epsilon}) when using the realizations of the neural network approximation -- the convex hull in the inner approximation and the intersection in the outer approximation as provided in Lemma~\ref{lemma:approx}.  However, as is clear from all figures, for (nearly) any choice of scalarization outside ``low'' $w_1$, the neural network \emph{outperforms} the direct computation. Further, we wish to remind the reader that the neural networks were trained without hyperparameter tuning and, as such, the errors $\epsilon(w)$ for the machine learning approach can be further improved.

\subsection{Many objective problem}\label{sec:many}

We now wish to consider how the neural networks can consider a problem with many objectives.  In particular we will consider a problem with $P = M \leq N$ objectives 
\begin{align*}
f_i(x) := (x_i - 1)^2 + \sum_{j \neq i} x_j^2
\end{align*}
for every $i = 1,...,P$ and with $M = P$ constraints $g_j(x) = f_j(x) - 1$ for every $j = 1,...,M$.
Note that this problem was considered as ``Test Instance 4'' in~\cite{eichfelder2021approximation}.\footnote{Within~\cite{eichfelder2021approximation} an additional $2N$ constraints are included so that $x \in [-1000,1000]^N$. However, this bounding box is automatically satisfied by $g_1(x)\leq 0,..., g_M(x) \leq 0$.}
Unlike in the prior example of Section~\ref{sec:2obj}, we do not have a closed form solution to the minimizers to this problem.
However, the neural network methodology presented above can still be applied to deduce inner and outer approximations for the weakly efficient frontier.
To consider the outer approximation, we need to consider the Lagrange dual function $d: \R^M \times \wcal \to \R$ of the weighted-sum scalarizations.  For this problem, the Lagrange dual can be computed as
\[d(\lambda,w) := \sum_{i = 1}^P \left[(w_i + \lambda_i) f_i\left(\frac{w_1+\lambda_1}{\sum_{j = 1}^P [w_j + \lambda_j]} , \cdots , \frac{w_P + \lambda_P}{\sum_{j = 1}^P [w_j + \lambda_j]} , 0 , \cdots , 0 \right) - \lambda_i\right]\]
for any $\lambda \in \R^M$ and $w \in \wcal$.
For test purposes we will vary $P = M \in \{2,...,20\}$ with $N = 100$ within this case study.\footnote{Due to the symmetry of this problem, we implement the primal neural network with $P+1$ outputs providing to the first $P$ variables and then a single value providing the same output $x_i = x_j$ for $i,j \in \{P+1,...,N\}$.}

With this setup, we can consider specific instantiations of the problem at the different choices of $P = M$ (always with fixed $N = 100$).  
Herein, we train each of these problems with 50 uniformly chosen random scalarizations $w \in \wcal$ of the correct dimension.  
Both primal and dual neural networks $x: \wcal \to \bbx$ and $\lambda: \wcal \to \R^M_+$ are designed with 2 hidden layers, each hidden layer with $500$ hidden nodes.  The terminal (projection) activation function for the primal neural network is considered with a tolerance of $5 \times 10^{-5}$ to guarantee primal feasibility as discussed in Remark~\ref{rem:tolerance} and with strictly feasible point $\bar x := \sum_{i = 1}^P \frac{1}{P} \times e_i$ for unit vectors $e_i \in \R^N$.
All intermediate activation functions (i.e., all but the terminal activation functions) are chosen to be the hyperbolic tangent function with linear linking between layers.
Finally, we train these networks over 200 epochs. 

\begin{figure}
\centering
\begin{subfigure}[t]{0.45\textwidth}
\centering
\includegraphics[width=\textwidth]{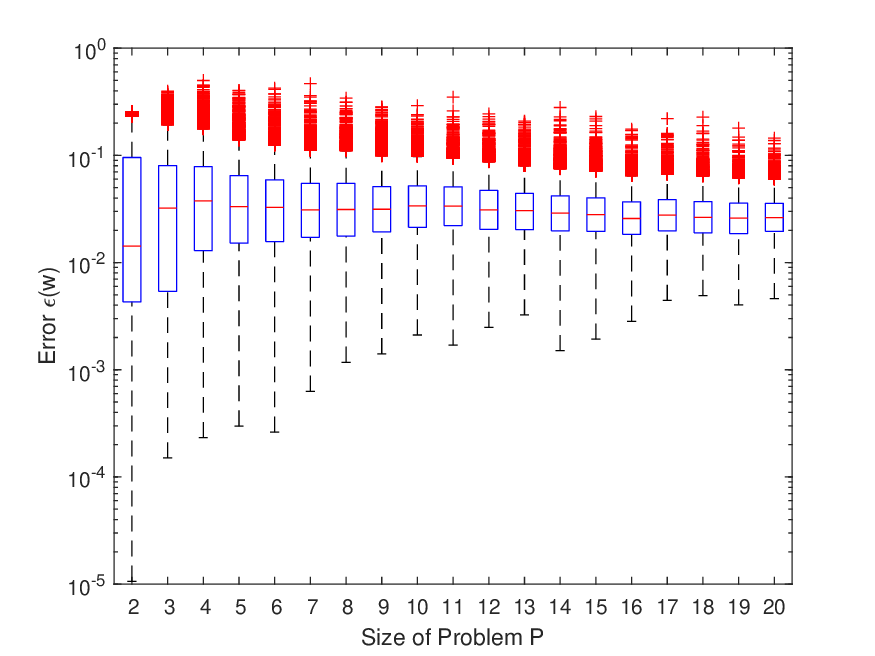}
\caption{Box plot of approximation errors $\epsilon(w)$ from machine learning approach. The central line provides the median; the box shows the inter-quartile range; outliers (red plus-signs) are determined to be further from the median than 1.5 times the inter-quartile range.}
\label{fig:boxplot}
\end{subfigure}
~
\begin{subfigure}[t]{0.45\textwidth}
\centering
\includegraphics[width=\textwidth]{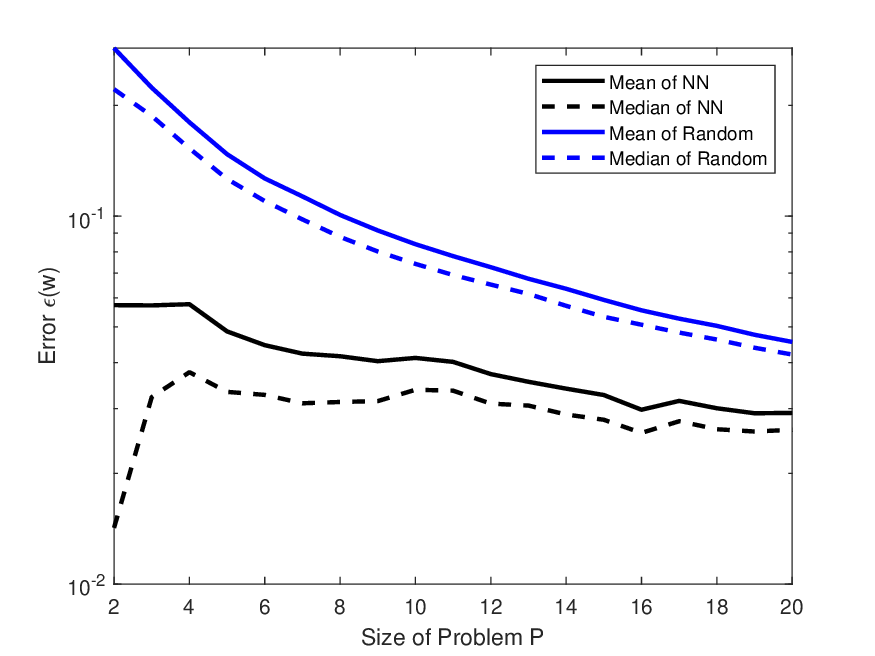}
\caption{Mean (solid) and median (dashed) estimation errors $\epsilon(w)$ from machine learning approach (black) and random primal/dual feasible points (blue).}
\label{fig:NNvsRand}
\end{subfigure}
~
\begin{subfigure}[t]{0.45\textwidth}
\centering
\includegraphics[width=\textwidth]{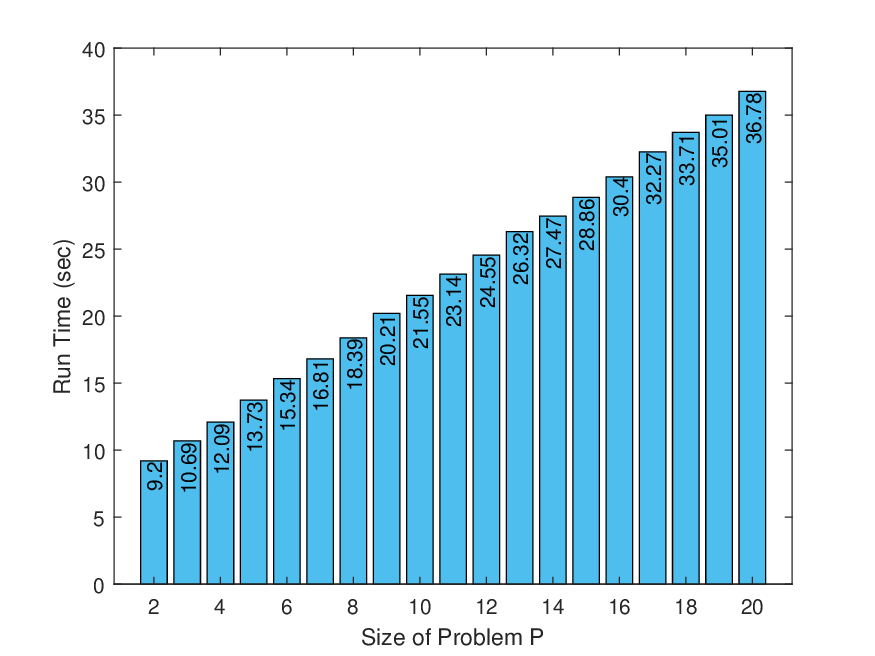}
\caption{Runtime for training the neural networks and computing the test scalarizations.}
\label{fig:time}
\end{subfigure}
\caption{Section~\ref{sec:many}: Visualizations of approximation errors $\epsilon(w)$ over 5000 (uniformly random) test scalarizations $w \in \wcal$ as the size of the problem $P = M$ changes with fixed number of primal variables $N = 100$ and associated runtimes when trained over 50 (uniformly random) scalarizations.}
\label{fig:many}
\end{figure}
In order to compare the performance of the inner and outer approximations as the size of the problem grows, we compute $\epsilon(w)$ for 5000 uniformly sampled scalarizations $w \in \wcal$ of the correct dimension.  These errors (in logarithmic scale) are displayed in Figure~\ref{fig:boxplot}.  Note that the errors $\epsilon(w)$ become more consistent as the dimension of the problem increases; this is noticeable with the shrinking of the error bars of the box plot.  
Additionally, as can be seen in Figure~\ref{fig:NNvsRand}, both the mean and median of these errors (tend to) decrease as the dimension of the problem grows.    
This improvement in the mean and median of the errors is primarily due to the shrinking size of the feasible region as the dimension grows.  For instance, as the problem size $P = M$ grows, the feasible region can be proven to exist within a box with shrinking volume, i.e., $\bbx \subseteq [0,1]^P \times [-1,1]^{N-P}$.  This is made clear in Figure~\ref{fig:NNvsRand} as the inner approximation from selecting a random primal feasible point (uniformly selected in $[0,1]^P \times \{0\}^{N-P}$ and projected onto $\bbx$ via the projection activation function proposed in Proposition~\ref{prop:primal}) and random dual feasible point (deterministically selecting $\lambda := {\bf 0}$) improves as the size of the problem increases.  
Importantly, the neural networks uniformly outperform the random approximations for all tested problems.
Finally, we wish to highlight that the reported approximation errors $\epsilon(w)$ in Figure~\ref{fig:many} are without considerations of the convex hull for the inner approximation and intersection for the outer approximation as in Lemma~\ref{lemma:approx}.  For this reason, we view the reported errors as an upper bound on those that would be realized by the neural networks constructed in this example. 

Beyond demonstrating that the proposed neural network approach is able to approximate the weak efficient frontier, we also use this many objective problem to comment on the computational requirements for training the primal and dual neural networks.  The computational runtimes (on a local machine) for training and testing the neural networks are displayed in Figure~\ref{fig:time}.  Though this methodology does take longer as the problem size $P = M$ grows, it does so in a relatively predictable manner.  The increased time is due to the growth in the size of both the primal and dual neural networks.  
Though a one-to-one comparison is not possible as the times reported in~\cite{eichfelder2021approximation} were run on a different machine than our neural network approach and were computed with fixed approximation errors, we conclude this discussion by directly comparing computation times and approximation errors with the reported results in~\cite{eichfelder2021approximation}. 
Specifically, we find that the neural network approach has longer runtime for $P = M \in \{2,3\}$ (neural network training required 9.2 and 10.69 seconds respectively) than the box coverage approach of~\cite{eichfelder2021approximation} (2.46 and 4.64) for fixed approximation error $\epsilon = 0.2$; with that same approximation error, already at $P = M = \{4,5\}$, the neural network approach outperforms the box coverage approach of~\cite{eichfelder2021approximation} both in runtime (12.09 and 13.73 seconds against 15.05 and 122.25 seconds respectively) with improved accuracy as well as $\mathbb{P}(\epsilon(w) < 0.2) \approx 98\%$ for uniformly sampled test scalarizations.
Improving the approximation error in the box coverage approach of~\cite{eichfelder2021approximation} to fixed error $\epsilon = 0.1$ (which notably is still worse than the proposed neural network approach on average in all test problems), the neural network approach is faster for $P = M =3$ (runtime of 22.90 seconds compared to 12.09 seconds for the neural network approach). In fact, demonstrating the curse of dimensionality in the box coverage approach, that method could not complete its computations within an hour even for $P = M = 4$ (compared to a runtime of 13.73 seconds for our neural network approach).  Even for $P = M = 20$, the neural network approach still completes all computations in under 37 seconds on a local machine.\footnote{Higher dimensional cases were not provided as the feasible region becomes too small.}

\subsection{High dimensional problem}\label{sec:high}
We now wish to consider how the neural networks can consider a problem with an arbitrarily large number of objectives and variables.  In particular we will consider a problem with $P = N$ objectives 
\begin{align*}
f_i(x) := x_i^2
\end{align*}
for every $i = 1,...,P$ and with $M = 1$ constraints $g_1(x) = \|x - (1+\epsilon){\bf 1}\|_2 - 1$ for fixed $\epsilon > 0$.  Throughout this section, we take $\epsilon = 0.01$ chosen arbitrarily.
Similar to the many objective problem in Section~\ref{sec:many}, we do not have a closed form solution to the minimizers to this problem.
However, the neural network methodology presented above can still be applied to deduce inner and outer approximations for the weakly efficient frontier.
To consider the outer approximation, we need to consider the Lagrange dual function $d: \R^M \times \wcal \to \R$ of the weighted-sum scalarizations.  For this problem, the Lagrange dual can be computed as
\[d(\lambda,w) := \inf_{x \in \R^N} \left[\sum_{i = 1}^N w_i x_i^2 + \lambda(\|x - (1+\epsilon){\bf 1}\|_2 - 1)\right]\]
for any $\lambda \in \R^M$ and $w \in \wcal$.
This problem was previously formulated as ``Problem 7.2'' in~\cite{bokrantz2013algorithm}.\footnote{Within \cite{bokrantz2013algorithm} the constraint is presented with the squared norm instead.}
For test purposes, in both case studies, we will vary $P = N \in \{2,...,10,15,...,50,60,...,100,500,1000,5000\}$.

We train all of these problems with 50 uniformly chosen random scalarizations $w \in \wcal$ of the correct dimension.  
Both primal and dual neural networks $x: \wcal \to \bbx$ and $\lambda \to \R^M_+$ are designed with 2 hidden layers, each hidden layer with $300$ hidden nodes.  The terminal (projection) activation function for the primal neural network is considered with a tolerance of $5 \times 10^{-5}$ to guarantee primal feasibility as discussed in Remark~\ref{rem:tolerance} and with strictly feasible point $\bar x := (1+\epsilon){\bf 1}$ for the one vector ${\bf 1} \in \R^N$.\footnote{For the implementation we follow the modified problem as discussed in Remark~\ref{rem:center0}.} 
In contrast to the prior numerical examples, here we use the soft plus activation function (i.e., $\log(1 + \exp(\cdot))$) to guarantee dual feasibility.
All intermediate activation functions (i.e., all but the terminal activation functions) are chosen to be the hyperbolic tangent function with linear linking between layers.
In order to compare the performance of the inner and outer approximations as the size of the problem grows, we compute $\epsilon(w)$ for 5000 uniformly sampled scalarizations $w \in \wcal$ of the correct dimension over varying number of epochs for the Adam optimizer.

\begin{figure}
\centering
\begin{subfigure}[t]{0.45\textwidth}
\centering
\includegraphics[width=\textwidth]{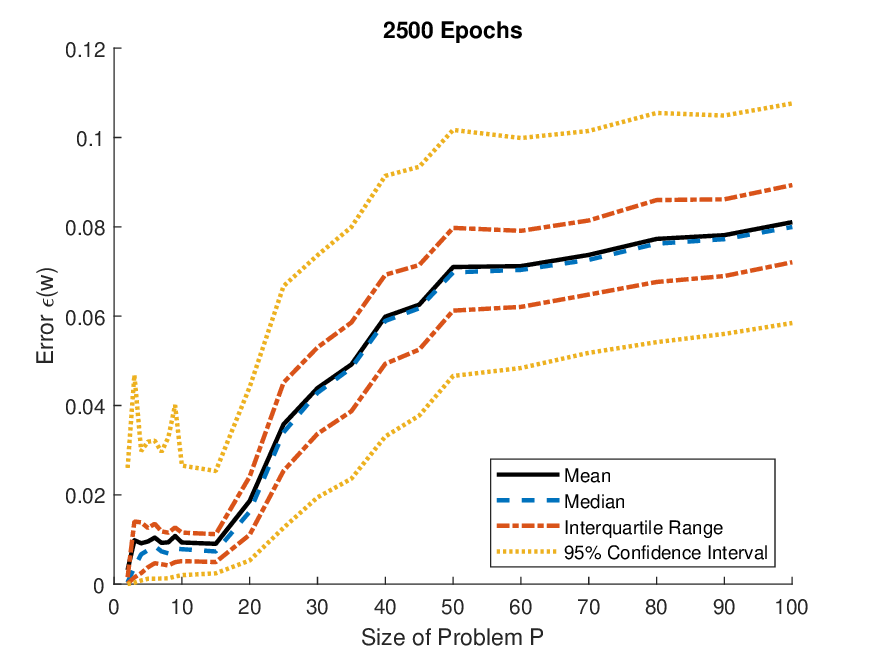}
\caption{Mean (solid), median (dashed), and confidence intervals (dash-dot: 50\% and dotted: 95\%) of the test approximation errors $\epsilon(w)$ from the machine learning approach when trained over 2500 epochs.}
\label{fig:NNerror72}
\end{subfigure}
~
\begin{subfigure}[t]{0.45\textwidth}
\centering
\includegraphics[width=\textwidth]{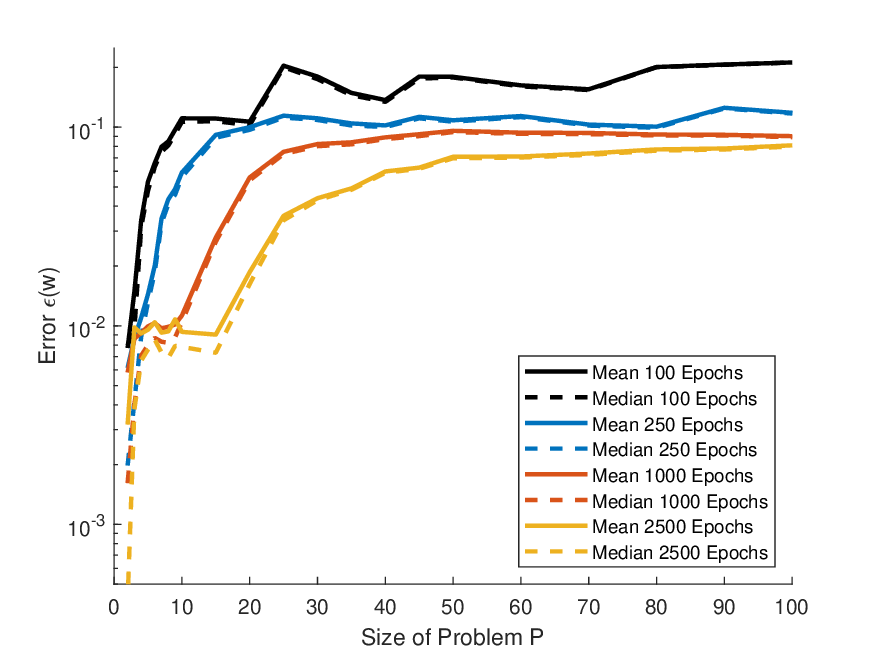}
\caption{Comparison of mean (solid) and median (dashed) of the test approximation errors $\epsilon(w)$ from the machine learning approach when trained over varying number of epochs.}
\label{fig:NNerror72epochs}
\end{subfigure}
~
\begin{subfigure}[t]{0.45\textwidth}
\centering
\includegraphics[width=\textwidth]{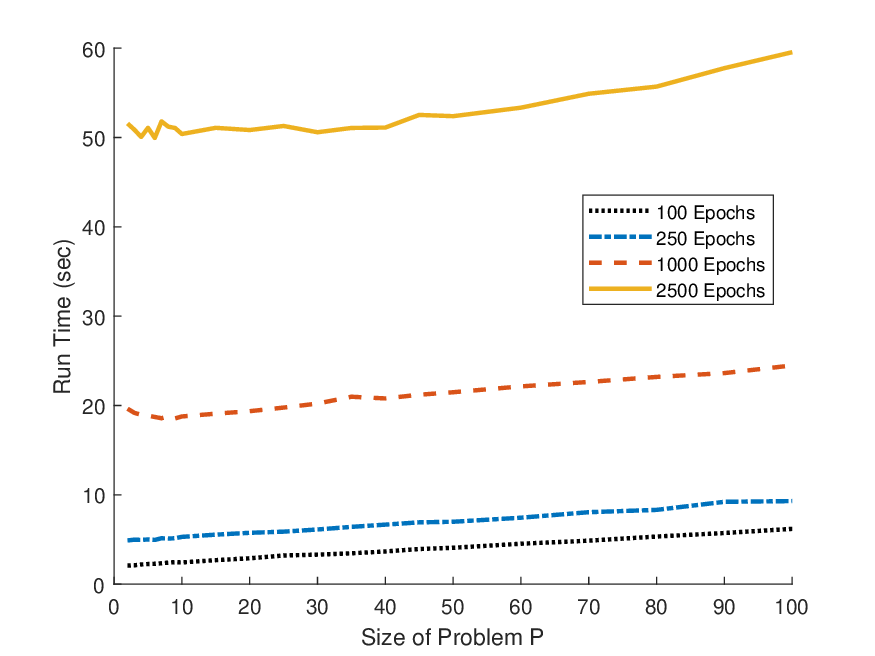}
\caption{Runtime for training the neural networks and computing the test scalarizations for varying number of epochs.}
\label{fig:time72}
\end{subfigure}
\caption{Section~\ref{sec:high}: Visualizations of approximation errors $\epsilon(w)$ over 5000 (uniformly random) test scalarizations $w \in \wcal$ as the size of the problem $P = N \leq 100$ and number of epochs of the Adam optimizer change as well as the associated runtimes when trained over 50 (uniformly random) scalarizations.}
\label{fig:high}
\end{figure}

In Figure~\ref{fig:NNerror72}, the test approximation errors are plotted as the size of the problem $P = N \leq 100$ varies after training for 2500 epochs of the Adam optimizer. Note that the errors are (approximately) an order of magnitude lower for $P = N = 2$ than $P = N = 3$ and beyond, i.e., on the order of $10^{-3}$ growing to $10^{-2}$.  These errors stabilize around $10^{-2}$ until $P = N = 15$ after which the errors grow (but level off) as the dimensions increase. This behavior can be seen in the mean, median, the interquartile range, and the 95\% confidence interval.
We can benchmark these errors in two ways:
\begin{enumerate}
\item If, in comparison, we naively chose the feasible points $\hat{x}(w) := (1+\epsilon){\bf 1}$ and $\hat{\lambda}(w) := 0$, then the errors would uniformly be the constant $\hat{\epsilon}(w) := (1+\epsilon)^2 > 1$. As such these neural networks vastly outperform the naive choice for all tested problems. Notably, randomly selecting primal feasible points uniformly results in comparable errors.
\item Alternatively, if we chose the $\hat{x}(w) := (1+\epsilon - 1/\sqrt{N}){\bf 1}$ (so that we select a point on the efficient frontier) and $\hat{\lambda}(w) := 0$, then the errors would depend on the dimension of the problem as $\hat{\epsilon}_N(w) := (1+\epsilon - 1/\sqrt{N})^2$. For $N = 2$ with our chosen $\epsilon = 0.01$, this results in an error of approximately $0.0917$ which is nearly 2 orders of magnitude larger than the test errors found by our neural network approach; as $N$ grows so do these potential errors always remaining significantly above the 95\% confidence interval as displayed in Figure~\ref{fig:NNerror72}.
\end{enumerate}
As with the prior example of Section~\ref{sec:many}, the neural networks could be further improved as the reported approximation errors $\epsilon(w)$ in Figure~\ref{fig:NNerror72} are without considerations of the convex hull for the inner approximation and the intersection for the outer approximation as in Lemma~\ref{lemma:approx}. For this reason, we view the reported errors as an upper bound on those that would be realized by the neural networks constructed in this example.

In Figure~\ref{fig:NNerror72epochs}, we explore the impact of training the neural networks on the test performance. In particular, we vary the number of epochs over which we train both the primal and dual neural networks between 100 and 2500. Two key results are observable. First, as we increase the amount of epochs over which we run the Adam optimizer, the test errors decrease. Note that these errors are displayed in a log-scale so that the errors decrease a significant amount as we increase the training time. Second, errors of around $10^{-2}$ can be obtained by training over more epochs; this error level can be obtained for larger dimensions if we increased the training time further. Notably this improved test performance in higher dimensions comes only from lengthening the number of training epochs while retaining the constant 50 (random) training scalarizations.

Beyond demonstrating that the proposed neural network approach is able to approximate the weak efficient frontier, we also use this problem to comment on the computational requirements for training the primal and dual neural networks.  The computational runtimes (on a local machine) for training and testing the neural networks (with $P = N \leq 100$) are displayed in Figure~\ref{fig:time72}.  
Though this methodology does take longer as the problem size $P = N$ grows, it does so at a nearly linear growth rate. If we expand beyond $P = N \geq 100$, we find that the linear rate remains up to the 500-dimensional problem (47.86 seconds); however the runtime grows superlinearly after around $P = N = 500$. For $P = N = 1000$, training the primal and dual neural networks took approximately 3 minutes and 43 seconds for 1000 epochs; for $P = N = 5000$, training these neural networks required 39 minutes and 16 seconds for 1000 epochs.
The increased time is due to the growth in the size of the primal dual neural network.  

As in Section~\ref{sec:many} above, a one-to-one comparison is not possible as the times reported in~\cite{bokrantz2013algorithm} were run on a different machine than our neural network approach, we conclude this discussion by directly comparing computation times and approximation errors with the reported results in~\cite{bokrantz2013algorithm}. 
Specifically, we find that the neural network approach has similar runtimes for $P = N \in \{2,3\}$ (approximately 10 seconds following \cite{bokrantz2013algorithm}). However, whereas the neural network methodology has nearly linear growth in runtime (for $P = N \leq 500$), the runtime for the methodology proposed within \cite{bokrantz2013algorithm} grows exponentially in the problem dimensions. In fact, already at only 14 dimensions, the methodology proposed within \cite{bokrantz2013algorithm} required over an hour in order to reach an error of approximately $0.25$ (i.e., a longer time than required for the neural networks in the 5000-dimensional setting).

\subsection{Mean-variance optimization}\label{sec:mean-var}
Consider now the mean-variance portfolio optimization problem proposed originally by Markowitz~\cite{markowitz1952} as a bi-objective optimization problem.  Specifically, this problem consists of a portfolio optimizer who is simultaneously attempting to maximize her expected return and minimize her risk (as encoded by the variance of the portfolio returns).  We will present this problem with no-short selling allowed, i.e., the investor can only purchase assets for her portfolio.  As such we will consider the following $P = 2$ objectives
\[f_1(\hat{x}) = -\bar{r}^\T \hat{x} \quad \text{ and } \quad f_2(\hat{x}) = \frac{1}{2}\hat{x}^\T C \hat{x}\]
over $S \geq 2$ assets in which the agent can invest, and where $\bar{r} \in \R^S$ denotes
the mean returns for each asset  and $C \in \R^{S \times S}$ is the covariance structure.  The investor is constrained by $M = S$ inequality constraints $g_j(\hat{x}) = -\hat{x}_j$ for every $j = 1,...,M$ (encoding the no-short selling constraints).
In contrast to the prior examples, herein the optimizer is constrained so that she uses all funds that she has available (i.e., ${\bf 1}^\T \hat{x} = 1$).
As discussed within Remark~\ref{rem:convex}\eqref{rem:convex-equality}, we consider the modification of this problem to guarantee the equality constraint is imposed throughout; specifically, we consider $N = S-1$ variables with $\hat{x} := (x^\T , 1-{\bf 1}^\T x)^\T$ for $x \in \R^N$.
To guarantee that this problem is strictly convex we will restrict the set of scalarizations under consideration to $\wcal_+ := \{w \in \wcal \; | \; w_2 > 0\}$ with positive definite covariance matrix $C$ as is discussed in Remark~\ref{rem:convex}\eqref{rem:convex-convex}.

To consider the outer approximation, we need to consider the Lagrange dual function $d: \R^M \times \wcal_+ \to \R$ of the weighted-sum scalarizations.  For this mean-variance problem, the Lagrange dual can be computed as
\begin{align*}
d(\lambda,w) &:= w^\T f(A_{\perp} \tilde{x}^*(\lambda,w) + e_s) - \lambda^\T(A_{\perp}\tilde{x}^*(\lambda,w) + e_s),\\
\tilde{x}^*(\lambda,w) &:= (w_2 A_{\perp}^\T C A_{\perp})^{-1} \left[A_{\perp}^\T \lambda + w_1(\bar{r}_{-S} - \bar{r}_S{\bf 1}) - w_2 A_{\perp}^\T C e_s\right],\\
A_{\perp} &:= \left(I_{N \times N} \, , \, -{\bf 1}\right)^\T \in \R^{S \times N}, \qquad e_s := ({\bf 0}^\T , 1)^\T \in \R^S
\end{align*}
for any $\lambda \in \R^M$ and $w \in \wcal_+$.

We will implement this mean-variance optimization problem on $S = 492$ stocks in the S\&P500 index.\footnote{These 492 stocks are currently in the S\&P500 index and had daily data available on Yahoo Finance from January 1, 2018 through December 31, 2021.} Empirical means and covariances for daily log returns from January 1, 2018 through December 31, 2021 are used to construct the mean asset returns $\bar{r}$ and covariance matrix $C$.\footnote{The neural network was considered with adjusted empirical mean returns and covariance matrix so that the objective values have comparable orders of magnitude. Specifically, the mean returns are multiplied by $100^2/6$ and the covariance matrix is multiplied by $100^2$. Without this modification of $1/6$, the training process was found to overly weight the importance of the mean returns and consistently results in the maximum return portfolio. Furthermore, the multiplication of both objectives by $100^2$ was used to eliminate numerical issues with the invertibility of the covariance matrix $C$ for computing the outer approximation.}
Herein, we train this problem with $5$ scalarizations: $w \in \{(0,1)^\T, (\frac{1}{4},\frac{3}{4})^\T, (\frac{1}{2},\frac{1}{2})^\T, (\frac{3}{4},\frac{1}{4})^\T, (1-10^{-5},10^{-5})^\T\}$. We then test this problem over 1001 scalarizations $w \in \{(i/1000,1-i/1000) \; | \; i \in \{0,1,...,1000\}\}$.  The primal neural network $x: \wcal_+ \to \bbx$ is designed with three hidden layers, each with 800 hidden nodes.  Due to the construction of the feasible region, we apply a ReLU activation function prior to the projection activation function, i.e., such that the neural network $z$ in Proposition~\ref{prop:primal} has a terminal ReLU activation function.  The terminal (projection) activation function for this primal neural network is considered with tolerance of $5 \times 10^{-5}$ to guarantee primal feasibility as discussed in Remark~\ref{rem:tolerance} and with strictly feasible point $\bar x := 7.5\times 10^{-5} {\bf 1}$.  The dual neural network $\lambda: \wcal_+ \to \R^{492}_+$ is designed with three hidden layers, each with 800 hidden nodes.  As with the high dimensional example in Section~\ref{sec:high}, here we use the soft plus activation function (i.e., $\log(1 + \exp(\cdot))$) to guarantee dual feasibility.  All intermediate activation functions (i.e., all but the terminal activation functions) are chosen to be the hyperbolic tangent function with linear linking between layers.  Finally, we train these networks over 5000 epochs.

\begin{figure}
\centering
\begin{subfigure}[t]{0.45\textwidth}
\centering
\includegraphics[width=\textwidth]{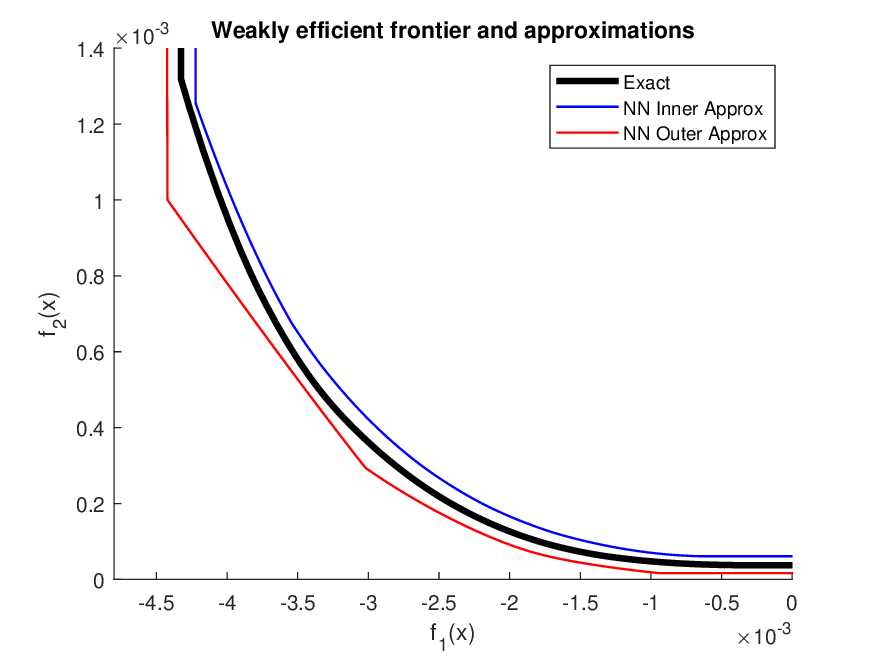}
\caption{Plot of weak efficient frontier (black) and approximations through neural networks (blue and red).} 
\label{fig:mv-efficient}
\end{subfigure}
~
\begin{subfigure}[t]{0.45\textwidth}
\centering
\includegraphics[width=\textwidth]{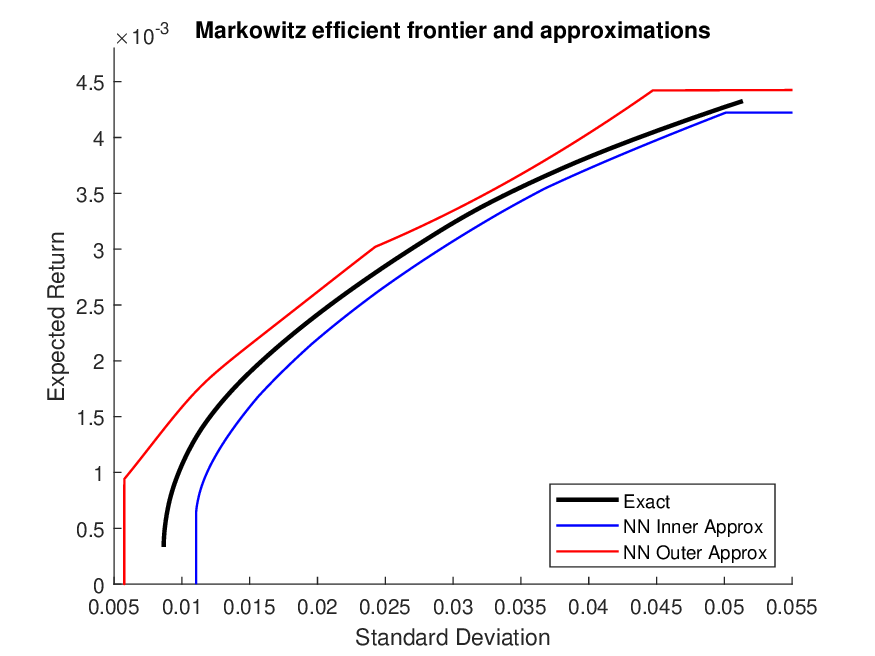}
\caption{Plot of the Markowitz bullet (black) and approximations through neural networks (blue and red).}
\label{fig:mv-bullet}
\end{subfigure}
~
\begin{subfigure}[t]{0.45\textwidth}
\centering
\includegraphics[width=\textwidth]{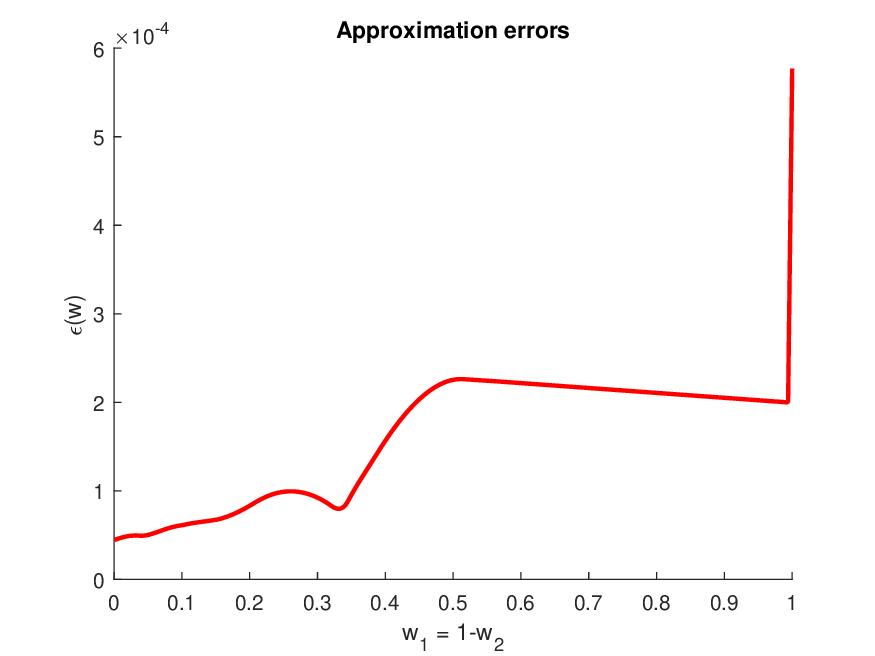}
\caption{Plot of the realized approximation errors $\epsilon(\cdot)$.}
\label{fig:mv-epsilon}
\end{subfigure}
\caption{Section~\ref{sec:mean-var}: Plot of the efficient frontier, Markowitz bullet, and approximation errors over 1001 regularly spaced test scalarizations. Neural networks are computed with 5 regularly spaced training scalarizations.}
\label{fig:mv}
\end{figure}

Figure~\ref{fig:mv-efficient} displays the true weak efficient frontier along with the neural network inner and outer approximations.  The true weak efficient frontier is plotted with a solid black curve.  The inner approximation provided by the primal neural network (i.e., the boundary $\bd\cl\co\bigcup_{w \in \wcal} [f(x(w)) + \R^2_+]$) is plotted as a solid blue line and the outer approximation provided by the dual neural network (i.e., the boundary $\bd\bigcap_{w \in \wcal_+} \{y \in \R^2 \; | \; w^\T y \geq d(\lambda(w),w)\}$) is plotted as a solid red line.  
Using the usual transformation of the the mean-variance problem, the Markowitz bullet approximations providing the relation between the standard deviation and mean of the portfolio return, are displayed in Figure~\ref{fig:mv-bullet}.
As shown in Figure~\ref{fig:mv-epsilon}, the realized approximation errors is on the order of $10^{-4}$ throughout the space of scalarizations $w \in \wcal_+$.  Note that the approximation errors jump up for $w_2 \approx 0$, i.e., when the problem approaches a linear objective (rather than strictly convex as imposed in Assumption~\ref{ass:slater}).

\section{Extending to non-strictly convex vector optimization problems}\label{sec:convex}
As stated within Remark~\ref{rem:convex}\eqref{rem:convex-convex}, within this section we wish to relax the strict convexity assumption of Assumption~\ref{ass:slater} so that we only require $f_i$ to be convex for every objective function $i = 1,...,P$.
That is, consider the CVOP~\eqref{eq:cvop} such that 
\begin{itemize}
\item $f_i$ are convex and continuously differentiable;
\item $g_j$ are convex and continuously differentiable;
\item \eqref{eq:cvop} is bounded;
\item Slater's condition holds with strictly feasible point $\bar x \in \R^N$.
\end{itemize}
With this set of assumptions, which differ from Assumption~\ref{ass:slater} only through the non-strict convexity of the objective functions, we again consider the weighted-sum scalarization problem~\eqref{eq:scalarization} and its dual~\eqref{eq:scalarization-dual} in order to learn the weakly efficient frontier. 

The impact of the potential non-strict convexity of the objective functions to the machine learning approach can be two-fold.
\begin{itemize}
\item
In contrast to the simple dual feasibility conditions~\eqref{eq:kkt-dual} under Assumption~\ref{ass:slater}, we might need to consider additional implicit dual feasibility conditions to guarantee finiteness of the Lagrange dual function~\eqref{eq:dual}. As this depends on the particular structure of the considered problem, we will discuss this in more detail in the applications where this issue occurs. The case of linear vector optimization problems are, for instance, discussed in Section~\ref{subsec:d}.
\item
The approach considered in Section~\ref{sec:nn} needs to be modified as neural networks can be unstable for discontinuous functions (e.g., the universal approximation theorem (Theorem~\ref{thm:uat}) only applies for continuous functions).  Specifically, due to the possibility of multiplicity of optimizers for convex problems, there may not exist a continuous selector of optimizers.
\end{itemize}
To address the second point above, the approach taken herein consists of constructing feasible primal and dual neural networks to the original problem~\eqref{eq:cvop} but where the risk functional \eqref{eq:loss} is based on the KKT conditions of the scalarizations of an \emph{augmented} CVOP with scalarization
\begin{equation}\label{eq:scalarization-convex}\tag{$\overline{\text{wP}}$}
\min\Big\{(1-\delta) \sum_{i = 1}^P w_i f_i(x) + \delta \bar f(x) \; | \; x \in \bbx\Big\},
\end{equation}
where $w \in \wcal$ with $\delta \in (0,1)$ small and $\bar f: \R^N \to \R$ \emph{strictly convex} and continuously differentiable.  In particular, setting $\bar f(x) := \|x\|_{2}^2$ implies that the objective of~\eqref{eq:scalarization-convex} is strongly convex and thus also strictly convex.

As demonstrated by Proposition~\ref{prop:berge} below, this augmentation provides a reliable approximation to the solution of the original weighted-sum scalarization~\eqref{eq:scalarization}.
\begin{proposition}\label{prop:berge}
Assume $\bbx \subseteq \R^N$ is compact and fix $w \in \wcal$.
Let $X^*(w) \subseteq \bbx$ be the set of primal solutions of the scalarization problem~\eqref{eq:scalarization} w.r.t.\ $w$.
Additionally, let $\bar{x}^*(w,\delta)$ be the primal solutions of the scalarization problem~\eqref{eq:scalarization-convex} w.r.t.\ $w$ and $\delta \in (0,1)$.   
Then there exists at least one accumulation point of $\{\bar{x}^*(w,\delta) \; | \; \delta \in (0,1)\}$ as $\delta \searrow 0$ and any such accumulation point is an element of $X^*(w)$, i.e., $\lim_{\delta \searrow 0} \bar{x}^*(w,\delta) \subseteq X^*(w)$.
\end{proposition}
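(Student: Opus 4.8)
The plan is to treat this as a Tikhonov-type regularization argument combined with compactness, rather than invoking Berge's maximum theorem directly (despite the proposition's name). First I would fix notation by writing $F_w(x) := \sum_{i=1}^P w_i f_i(x)$ for the scalarized objective and $G_{w,\delta}(x) := (1-\delta) F_w(x) + \delta \bar f(x)$ for the augmented objective of~\eqref{eq:scalarization-convex}. Since $\bbx$ is compact and both $F_w$ and $\bar f$ are continuous (being continuously differentiable), Weierstrass's theorem guarantees that $X^*(w)$ is nonempty and that the augmented problem attains its minimum; moreover $G_{w,\delta}$ is strictly convex for each $\delta \in (0,1)$, being the sum of the convex function $(1-\delta)F_w$ and the strictly convex function $\delta \bar f$, so $\bar x^*(w,\delta)$ is indeed the unique minimizer. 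Because $\{\bar x^*(w,\delta) \; | \; \delta \in (0,1)\} \subseteq \bbx$ and $\bbx$ is compact, the Bolzano--Weierstrass theorem ensures that for any sequence $\delta_n \searrow 0$ the points $\bar x^*(w,\delta_n)$ admit a convergent subsequence whose limit lies in $\bbx$ (as $\bbx$ is closed). This establishes the existence of at least one accumulation point.

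For the second claim I would fix an arbitrary accumulation point $\hat x$, choose a sequence $\delta_n \searrow 0$ with $\bar x^*(w,\delta_n) \to \hat x$, and exploit the optimality inequality $G_{w,\delta_n}(\bar x^*(w,\delta_n)) \le G_{w,\delta_n}(x)$, valid for every $x \in \bbx$. Passing to the limit $n \to \infty$ for a fixed $x$: on the left, $(1-\delta_n) \to 1$, $F_w(\bar x^*(w,\delta_n)) \to F_w(\hat x)$ by continuity, and $\delta_n \bar f(\bar x^*(w,\delta_n)) \to 0$; on the right, $(1-\delta_n)F_w(x) + \delta_n \bar f(x) \to F_w(x)$. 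Hence $F_w(\hat x) \le F_w(x)$ for every $x \in \bbx$, which is exactly the statement that $\hat x$ minimizes $F_w$ over $\bbx$, i.e., $\hat x \in X^*(w)$. Since $\hat x$ was an arbitrary accumulation point, every such point lies in $X^*(w)$, as claimed.

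As for the main obstacle, the argument is largely routine and the one point requiring care is the vanishing of the regularization term. The crucial observation is that we should not try to control $\bar f(\bar x^*(w,\delta_n))$ pointwise; rather, since $\bar f$ is continuous on the compact set $\bbx$ it is bounded there, so the product $\delta_n \bar f(\bar x^*(w,\delta_n))$ vanishes in the limit regardless of the behaviour of the minimizers. Compactness of $\bbx$ is thus used twice: once to produce accumulation points and once to annihilate the regularization term. I would also remark that the notation $\lim_{\delta \searrow 0} \bar x^*(w,\delta) \subseteq X^*(w)$ in the statement must be read as asserting that the set of accumulation points of $\{\bar x^*(w,\delta)\}$ as $\delta \searrow 0$ is contained in $X^*(w)$; a genuine single limit need not exist, since $X^*(w)$ may fail to be a singleton when $F_w$ is merely convex rather than strictly convex.
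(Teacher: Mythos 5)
Your proof is correct, but it takes a genuinely different route from the paper. The paper extends the augmented problem to $\delta \in [0,1)$ (so that $\delta = 0$ recovers~\eqref{eq:scalarization}), defines the solution correspondence $\delta \mapsto \bar{X}^*(w,\delta)$, and invokes the Berge maximum theorem to obtain upper hemicontinuity, then the closed graph theorem to conclude that accumulation points at $\delta \searrow 0$ land in $X^*(w) = \bar{X}^*(w,0)$; existence of an accumulation point again comes from compactness of $\bbx$. You instead give a direct, self-contained Tikhonov-style argument: pass to the limit in the optimality inequality $G_{w,\delta_n}(\bar x^*(w,\delta_n)) \leq G_{w,\delta_n}(x)$ for each fixed $x \in \bbx$, using boundedness of $\bar f$ on the compact set $\bbx$ to annihilate the regularization term without any pointwise control of $\bar f$ along the minimizing family --- which is exactly the step where care is needed, and you handle it properly. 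Your approach buys elementarity (no appeal to hemicontinuity machinery or to the closed graph characterization) and it additionally makes explicit, via Weierstrass, that $X^*(w) \neq \emptyset$, which the paper's proof leaves tacit; the paper's approach buys brevity and strictly more information, namely upper hemicontinuity of the solution correspondence on all of $[0,1)$, not merely the limiting inclusion at $\delta = 0$. Your closing remark on reading $\lim_{\delta \searrow 0} \bar x^*(w,\delta) \subseteq X^*(w)$ as an inclusion of the set of accumulation points, with no single limit guaranteed when $F_w$ is merely convex, matches the intended interpretation.
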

\begin{proof}
Consider the extension of~\eqref{eq:scalarization-convex} to include $\delta \in [0,1)$, i.e., such that \eqref{eq:scalarization-convex} reduces to the original weighted-sum scalarization \eqref{eq:scalarization} at $\delta = 0$.  Let $\bar{X}^*(w,\delta) \subseteq \bbx$ denote the set of optimizers for~\eqref{eq:scalarization-convex} for any $\delta \in [0,1)$, i.e.,
\[\bar{X}^*(w,\delta) := \begin{cases} X^*(w) &\text{if } \delta = 0 \\ \{\bar{x}^*(w,\delta)\} &\text{if } \delta \in (0,1). \end{cases}\]
By the Berge maximum theorem (see, e.g.,~\cite[Theorem 17.31]{AB07}), $\delta \in [0,1) \mapsto \bar{X}^*(w,\delta)$ is upper continuous.  In particular, by the closed graph theorem (see, e.g.,~\cite[Theorem 17.11]{AB07}), the graph of $\delta \in [0,1) \mapsto \bar{X}^*(w,\delta)$ is closed which implies any accumulation point $\bar{x}^*(w)$ of $\{\bar{x}^*(w,\delta) \; | \; \delta \in (0,1)\}$ as $\delta \searrow 0$ is an element of $X^*(w)$.   
Finally, since $\{\bar{x}^*(w,\delta) \; | \; \delta \in (0,1)\} \subseteq \bbx$, there must exist at least one accumulation point as $\delta \searrow 0$.
\end{proof}

\begin{remark}
Under suitable additional assumptions for the uniqueness of the dual solution $\bar{\lambda}^*(w,\delta)$ to the Lagrange dual problem of~\eqref{eq:scalarization-convex}, a comparable statement to Proposition~\ref{prop:berge} can be given for $\bar{\lambda}^*(w,\delta) \to \lambda^*(w)$. 
\end{remark}

To demonstrate this approach, we will first illustrate the exact construction for a generic linear vector optimization problems (LVOP) in Section~\ref{sec:convex-lvop} along with a simple numerical example in that setting.  We will then consider in Section~\ref{sec:mr} a version of the mean-risk problem to present a separate, practical problem from financial applications.

\subsection{Linear vector optimization problems}\label{sec:convex-lvop}
Consider the generic inequality-constrained LVOP
\[\min\{Cx \; | \; Ax \leq b\}\]
for $C \in \R^{P \times N},~A \in \R^{M \times N},~b \in \R^M$.  That is, $\bbx := \{x \in \R^N \; | \; Ax \leq b\}$.\footnote{Equality constraints can be included as discussed within Remark~\ref{rem:convex}\eqref{rem:convex-equality}.}
By construction, this is a CVOP that is \emph{not} strictly convex.  We will use this problem to illustrate the modifications that are necessary to formalize the neural network approach provided within Section~\ref{sec:nn} to guarantee the inner and outer approximation results provided within Section~\ref{sec:approx}.
Briefly, the idea is to construct primal and dual feasible neural networks to the LVOP that approximate the solution to a closely related \emph{strictly convex} CVOP.

Using this LVOP, let us construct our primal and dual neural networks 
followed by a discussion of the necessary changes to the loss function for training purposes.  We wish to note that the inner and outer approximation results provided within Section~\ref{sec:approx} can be applied to the constructions provided within this section without modification.

\subsubsection{Primal feasible neural network}\label{subsec:p}
Consider the weighted-sum scalarization problem
\[\min\{w^\T Cx \; | \; Ax \leq b\}\]
for $w \in \wcal$.
Herein we construct the primal feasible neural network $x: \wcal \to \bbx$ exactly as in Section~\ref{sec:nn-primal} and, specifically, as detailed in Proposition~\ref{prop:primal}.  
That is, let $z: \wcal \to \R^N$ define an arbitrary neural network.  Let $x: \wcal \to \R^N$ such that
\[x(w) := (1-t^*(z(w))) z(w) + t^*(z(w)) \bar x\]
where $t^*: \R^N \to [0,1]$ is defined as in~\eqref{eq:t}.

\subsubsection{Dual feasible neural network}\label{subsec:d}
Consider the Lagrange dual problem to the weighted-sum scalarization problem w.r.t.\ $w \in \wcal$.  Herein we need to be careful with the domain of the Lagrange dual function $d: \R^M \times \wcal \to \R$; specifically, we can explicitly compute $d$ as
\begin{align*}
d(\lambda,w) := \inf_{x \in \R^N} \left[w^\T C x + \lambda^\T(Ax - b)\right] = \begin{cases} -b^\T \lambda &\text{if } A^\T\lambda = -C^\T w \\ -\infty &\text{else}\end{cases}
\end{align*}
for any $\lambda \in \R^M,~w \in \wcal$.  Therefore the dual problem includes the additional equality constraint $A^\T\lambda = -C^\T w$, i.e.,~\eqref{eq:scalarization-dual} can be specified as
\[\max\{-b^\T\lambda \; | \; A^\T\lambda = -C^\T w,~\lambda \in \R^M_+\}.\]
For notational simplicity, define $\Lambda(w) := \{\lambda \in \R^M \; | \; -\lambda \in -\R^M_+,~A^\T \lambda = -C^\T w\}$ to provide the set of feasible dual solutions $\lambda$. 

\begin{assumption}\label{ass:dual-feasible}
There exists a strictly feasible point $\bar{\lambda}(w) \in \Lambda(w)$ for every $w \in \wcal$.  That is, $A^\T\bar{\lambda}(w) = -C^\T w$ with $\bar{\lambda}(w) \in \R^M_{++}$ for every $w \in \wcal$. 
\end{assumption}

\begin{remark}\label{rem:dual_strict_feasible}
A strictly feasible dual point $\bar{\lambda}(w) \in \Lambda(w)$ can be found by solving the linear program
\[\bar{\lambda}(w) \in \argmin\{\sum_{i = 1}^M \lambda_i \; | \; -\lambda + \bar{\epsilon} \in -\R^M_+,~A^\T \lambda = -C^\T w\}\]
for some $\bar{\epsilon} \in \R^M_{++}$.  Due to the existence of a strictly feasible point (Assumption~\ref{ass:dual-feasible}), so long as $\bar{\epsilon}$ is set small enough, this problem will be feasible.
\end{remark}

Herein our goal is to formulate a neural network $\lambda: \wcal \to \R^M$ that is dual feasible, i.e., $\lambda(w) \in \Lambda(w)$ for every $w \in \wcal$.  
Let $\bar{\lambda}(w) \in \Lambda(w)$ define a \emph{strictly} feasible point for every $w \in \wcal$ as given in Assumption~\ref{ass:dual-feasible}.  
As discussed within Remark~\ref{rem:convex}\eqref{rem:convex-equality}, we want to consider a reformulation of $\Lambda(w)$ to eliminate the need for the equality constraints so that this set takes the general form of $\bbx$.
Specifically, let $(A^\T)_{\perp} \in \R^{M \times (M-N)}$ provide a basis for the null space of $A^\T$ (assuming $A^\T$ has full rank).  Then $\Lambda(w)$ can be reformulated without the equality constraint as
\[\Lambda(w) = \{(A^\T)_{\perp} z + \bar{\lambda}(w) \; | \; -[(A^\T)_{\perp} z - \bar{\lambda}(w)] \leq {\bf 0},~z \in \R^{M-N}\}.\]

Within the following proposition, we construct an analog of Proposition~\ref{prop:primal} for this dual feasibility constraint.  
\begin{proposition}\label{prop:dual}
Let $z: \wcal \to \R^{M-N}$ define an arbitrary neural network.  Let $\lambda: \wcal \to \R^M$ such that
\begin{align*}
\lambda(w) &:= (1-\bar{t}^*((A^\T)_\perp z(w) + \bar{\lambda}(w),\bar{\lambda}(w)))(A^\T)_\perp z(w) + \bar{y}(w),\\
\bar{t}^*(\hat{z},\bar{\lambda}) &:= \max_{j = 1,...,M} \left\{\frac{\hat{z}_j^-}{\hat{z}_j^- + \bar{\lambda}_j}\right\},
\end{align*}
where $\bar{t}^*(\hat{z},\bar{\lambda}) = 0$ if $\hat{z}_j \geq 0$ for every $j = 1,...,M$ and $\bar{\lambda}: \wcal \to \R^M$ provides a strictly feasible point for any $w \in \wcal$.  Then $\lambda(w) \in \Lambda(w)$ for every $w \in \wcal$.
\end{proposition}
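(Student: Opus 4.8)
The plan is to mirror the proof of Proposition~\ref{prop:primal}, but now carried out \emph{inside} the affine subspace $\{\lambda \in \R^M \; | \; A^\T\lambda = -C^\T w\}$, with the strictly feasible point $\bar\lambda(w)$ playing the role that $\bar x$ played in the primal case. (Here I read the $\bar y(w)$ appearing in the statement as $\bar\lambda(w)$.) The first step is to rewrite $\lambda(w)$ as a genuine convex combination. Setting $\hat z := (A^\T)_\perp z(w) + \bar\lambda(w)$ for the uncorrected candidate dual point and abbreviating $\bar t^* := \bar t^*(\hat z,\bar\lambda(w))$, a one-line computation expanding $(1-\bar t^*)\hat z + \bar t^* \bar\lambda(w)$ shows it equals $(1-\bar t^*)(A^\T)_\perp z(w) + \bar\lambda(w)$, i.e.\ exactly the defining formula for $\lambda(w)$. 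Thus $\lambda(w) = (1-\bar t^*)\hat z + \bar t^* \bar\lambda(w)$ is a convex combination of $\hat z$ and the strictly feasible $\bar\lambda(w)$, provided $\bar t^* \in [0,1)$.

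Next I would dispatch the equality constraint, which holds automatically for any value of $\bar t^*$. Since $(A^\T)_\perp$ is a basis for the null space of $A^\T$, we have $A^\T(A^\T)_\perp = {\bf 0}$, and $\bar\lambda(w)$ satisfies $A^\T\bar\lambda(w) = -C^\T w$ by Assumption~\ref{ass:dual-feasible}; hence $A^\T\lambda(w) = (1-\bar t^*)A^\T(A^\T)_\perp z(w) + A^\T\bar\lambda(w) = -C^\T w$. The null-space reparametrization therefore guarantees the affine constraint for free, and only componentwise nonnegativity remains to verify.

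To handle nonnegativity I would first check $\bar t^* \in [0,1)$: by strict feasibility $\bar\lambda_j(w) > 0$, so each term $\hat z_j^-/(\hat z_j^- + \bar\lambda_j(w))$ is well defined, has a denominator strictly exceeding its (nonnegative) numerator, and hence lies in $[0,1)$; the maximum over $j$ inherits this bound. Then I argue per component. Fix $j$: if $\hat z_j \geq 0$, then $\lambda_j(w) = (1-\bar t^*)\hat z_j + \bar t^*\bar\lambda_j(w)$ is a convex combination of two nonnegative numbers and so $\lambda_j(w) \geq 0$. If instead $\hat z_j < 0$, then $\hat z_j^- = -\hat z_j > 0$ and, by definition of the maximum, $\bar t^* \geq \hat z_j^-/(\hat z_j^- + \bar\lambda_j(w))$. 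The map $t \mapsto (1-t)\hat z_j + t\bar\lambda_j(w)$ is affine with slope $\bar\lambda_j(w) - \hat z_j > 0$, hence strictly increasing, and it vanishes precisely at $t = \hat z_j^-/(\hat z_j^- + \bar\lambda_j(w))$; evaluating at the larger value $\bar t^*$ therefore yields $\lambda_j(w) \geq 0$. Combining this with the affine-feasibility step gives $\lambda(w) \in \Lambda(w)$ for every $w \in \wcal$.

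The argument is essentially routine once the right viewpoint is fixed, so I do not expect a serious obstacle; the one genuine idea, and hence the only place to be careful, is that $\Lambda(w)$ is an \emph{affine slice} of the nonnegative orthant rather than the orthant itself. Consequently the convex-combination ``projection'' must be performed within the affine subspace, using the strictly interior $\bar\lambda(w)$ as the pull-back target and the null-space coordinates to keep the equality constraint satisfied throughout. Everything downstream (the $[0,1)$ bound on $\bar t^*$ and the threshold check for the coordinates with $\hat z_j < 0$) then transcribes directly from the primal argument of Proposition~\ref{prop:primal}.
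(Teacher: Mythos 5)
Your proof is correct and is exactly the argument the paper intends: the paper omits the proof of Proposition~\ref{prop:dual}, stating only that it is analogous to Proposition~\ref{prop:primal}, and your argument --- writing $\lambda(w)$ as the convex combination $(1-\bar t^*)\hat z + \bar t^*\bar\lambda(w)$ with $\bar\lambda(w)$ in the role of $\bar x$, disposing of the equality constraint via the null-space reparametrization, and checking $\bar t^* \in [0,1)$ together with the componentwise vanishing threshold --- is precisely that analogue. You also correctly read the statement's $\bar y(w)$ as a typo for $\bar\lambda(w)$, without which the defining formula would not make sense.
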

Due to the similarity of this proposition to Proposition~\ref{prop:primal} we skip the proof of it here.

\subsubsection{Loss function}
Consider now the modified weighted-sum scalarization problem~\eqref{eq:scalarization-convex} with fixed weight $\delta \in (0,1)$.
An application of Theorem~\ref{thm:kkt} implies that $(\bar{x}^*(w,\delta),\bar{\lambda}^*(w,\delta)) \in \R^N \times \R^M$ are a pair of primal ($\bar{x}^*(w,\delta)$) and dual ($\bar{\lambda}^*(w,\delta)$) solutions of the scalarization problem~\eqref{eq:scalarization-convex}, respectively its Lagrange dual problem,  w.r.t.\ $w \in \wcal$ and $\delta \in (0,1)$ if and only if they jointly satisfy the KKT conditions
\begin{align*}
{\bf 0} &= (1-\delta)C^\T w + \delta \nabla\bar{f}(\bar{x}^*(w,\delta)) + A^\T \bar{\lambda}^*(w,\delta) \\
b &\geq A\bar{x}^*(w,\delta) \\ 
{\bf 0} &\leq \bar{\lambda}^*(w,\delta) \\
0 &= \bar{\lambda}_i^*(w,\delta) [A\bar{x}^*(w,\delta) - b]_i  \quad i=1,...,m.
\end{align*}
Note that the primal and dual feasibility constraints of the original, unmodified, scalarization problems~\eqref{eq:scalarization} are included in those of the modified scalarization problem~\eqref{eq:scalarization-convex}. Thus, the neural networks ($x(\cdot),\lambda(\cdot)$) constructed in Sections~\ref{subsec:p} and~\ref{subsec:d} from the original, unmodified, scalarization problem~\eqref{eq:scalarization} are automatically also feasible for the modified problem~\eqref{eq:scalarization-convex}. Hence, the only difference in methodology is to use the risk functional of the modified problem~\eqref{eq:scalarization-convex} instead of the original risk functional~\eqref{eq:loss} of problem~\eqref{eq:scalarization}.
The risk functional based on the KKT conditions of the modified problem~\eqref{eq:scalarization-convex} can be constructed in analogy to Section~\ref{sec:nn-loss} as
\begin{equation*}
L(x(\cdot),\lambda(\cdot);\eta) := \frac{1}{K} \sum_{k = 1}^K \left(\begin{array}{l} \|(1-\delta)C^\T w_k + \delta\nabla\bar{f}(x(w_k)) + A^\T \lambda(w_k)\|_2^2 \\ \quad + \eta \|\diag(\lambda(w_k)) [Ax(w_k) - b]\|_2^2\end{array}\right).
\end{equation*}
Recall from Proposition~\ref{prop:berge} that for $\delta \in (0,1)$ small enough, if the loss of the neural networks is small enough, we can confidently conclude that the obtained solutions $(\bar{x}^*(\cdot,\delta),\bar{\lambda}^*(\cdot,\delta))$ are ``close'' to the true solutions of~\eqref{eq:scalarization}.

\subsubsection{Inner and outer approximations}
Following the construction of the primal and dual neural networks for the LVOP, we want to comment on the inner and outer approximations as presented in Section~\ref{sec:approx}.
Formally, the approximation results of Section~\ref{sec:approx} follow directly from feasibility (both primal and dual feasibility) and strong duality.  
As we recover feasibility of our neural networks, and strong duality still holds due to Slater's condition, no modifications are necessary to recover these approximations.
In particular, the primal neural network $x: \wcal \to \R^N$ provides an inner approximation and the dual neural network $\lambda: \wcal \to \R^M$ provides an outer approximation of the upper set $\pcal$ via the relation
\[\cl\co\bigcup_{w \in \wcal}\left[C x(w) + \R^P_+\right] \subseteq \pcal \subseteq \bigcap_{w \in \wcal} \left\{y \in \R^P \; | \; w^\T y \geq -b^\T\lambda(w)\right\}.\]

\subsubsection{Discussion and numerical results}

Consider now the bi-objective ($P = 2$) LVOP with $N = 2$ decision variables and $M = 5$ inequality constraints constructed by
\[C := \left(\begin{array}{cc} 1 & 0 \\ 0 & 1 \end{array}\right), \qquad A := \left(\begin{array}{rr} -2 & -1 \\ -1 & -2 \\ 1 & 1 \\ -1 & 0 \\ 0 & -1 \end{array}\right), \quad\text{ and }\quad b := \left(\begin{array}{r} -2 \\ -2 \\ 6 \\ 0 \\ 0 \end{array}\right).\]
To place it within the strictly convex setting, we introduce the augmented form with $\bar f(x) := \|x\|_{2}^2$ included with constant weight $\delta = 10^{-4}$.

As in Section~\ref{sec:cs}, the neural networks for this example were computed with PyTorch on a local machine using the Adam optimizer with learning rate $10^{-4}$.  Here, we weight the complimentary slackness condition~\eqref{eq:kkt-cs} with $\eta = 10^{-4}$ so that the first-order condition error~\eqref{eq:kkt} and complimentary slackness error are, initially, of the same order of magnitude.  We train these neural networks for 500 epochs.  We wish to note that all other hyperparameters -- except the terminal activation functions as presented within this section -- are chosen arbitrarily and were not found via, e.g., a grid search.

Specifically, we have two neural networks to design: the primal ($x: \wcal \to \R^2$) and dual ($\lambda: \wcal \to \R^5$) neural networks.  The primal neural network $x$ is designed with three hidden layers, each with 800 hidden nodes; the terminal (projection) activation function is considered with a tolerance of $5 \times 10^{-5}$ to guarantee primal feasibility as discussed in Remark~\ref{rem:tolerance} and with strictly feasible point $\bar{x} := {\bf 1}$.  The dual neural network $\lambda$ is designed with three hidden layers, each with 800 hidden nodes.  The terminal (projection) activation function is considered with a tolerance of $5 \times 10^{-5}$ to guarantee dual feasibility as discussed in Remark~\ref{rem:tolerance} and with strictly feasible points $\bar{\lambda}: \wcal \to \R^M$ constructed via the linear programming approach outlined within Remark~\ref{rem:dual_strict_feasible} with $\bar{\epsilon} := 5 \times 10^{-3} \times {\bf 1}$.
All intermediate activation functions (i.e., all but the terminal activation functions) are chosen to be the hyperbolic tangent function with linear linking between layers.  

Figure~\ref{fig:lvop-efficient} displays the true weak efficient frontier along with the neural network inner and outer approximations on test scalarizations $w \in \{(i/500,1-i/500) \; | \; i \in \{0,1,...,500\}\}$ after training on $w \in \{(i/19 , 1-i/19) \; | \; i \in \{0,1,...,19\}\}$ with $\delta = 10^{-4}$.  The true weak efficient frontier is plotted with a solid black curve.  The inner approximation provided by the primal neural network (i.e., the boundary $\bd\cl\co\bigcup_{w \in \wcal} [Cx(w) + \R^2_+]$) is plotted as a solid blue line and the outer approximation provided by the dual neural network (i.e., the boundary $\bd\bigcap_{w \in \wcal} \{y \in \R^2 \; | \; w^\T y \geq -b^\T\lambda(w)\}$) is plotted as a solid red line.  Immediately noticeable, the neural network approximations almost completely overlap with each other and, in particular, capture the vertices of the true weak efficient frontier.  
The errors $\epsilon(w)$ can be directly seen in Figure~\ref{fig:2obj-epsilon} which displays the data in a logarithmic scale.  For these scalarizations, the original neural network approaches provide relatively high errors (see the blue line ``NN Error'').  However, when using the realizations of the neural network approximation -- the convex hull in the inner approximation and the intersection in the outer approximation as provided in Lemma~\ref{lemma:approx} -- these errors drop significantly with most scalarizations having errors around $10^{-2}$.
Further, we wish to remind the reader that the neural networks were trained without hyperparameter tuning and, as such, the errors $\epsilon(w)$ for the machine learning approach can be further improved.

\begin{figure}
\centering
\begin{subfigure}[t]{0.45\textwidth}
\centering
\includegraphics[width=\textwidth]{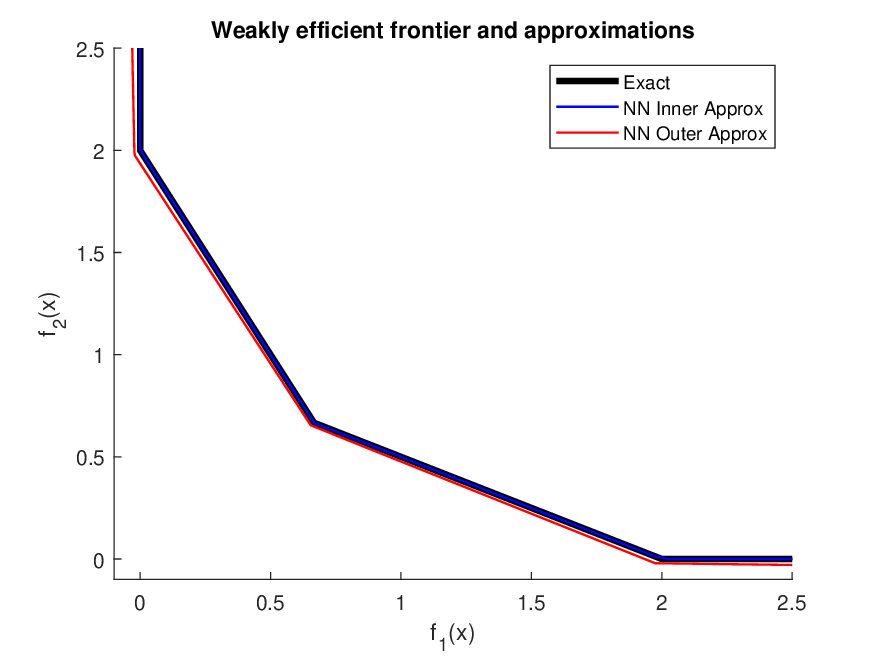}
\caption{Plot of weak efficient frontier (black) and approximations through neural networks (blue and red).} 
\label{fig:lvop-efficient}
\end{subfigure}
~
\begin{subfigure}[t]{0.45\textwidth}
\centering
\includegraphics[width=\textwidth]{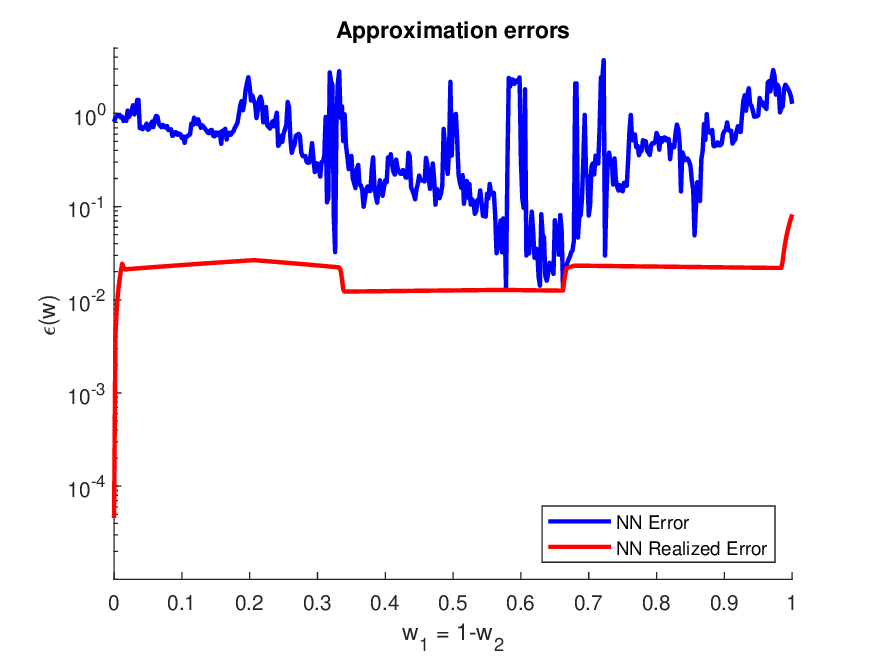}
\caption{Plot of approximation errors $\epsilon(\cdot)$ in log-scale so that the neural network (blue) and realized approximations (red) provide $\epsilon(\cdot)$-inner approximations.}
\label{fig:lvop-epsilon}
\end{subfigure}
\caption{Section~\ref{sec:convex-lvop}: Plot of the efficient frontier and approximation errors for 501 regularly spaced test scalarizations. Neural networks are computed with the 20 regularly spaced training scalarizations.}
\label{fig:lvop}
\end{figure}

\subsection{Mean-risk problem}\label{sec:mr}
Herein we want to revisit a practical example by studying the mean-risk problem in finance with dynamic trading.  Similar to the mean-variance problem presented within Section~\ref{sec:mean-var}, this is a bi-objective problem that allows for easy visualization of the optimal portfolio.  However, in contrast to the mean-variance problem, herein we consider risk as measured by the entropic risk measure \cite[Example 4.34]{FS11}.  We will present this problem with no-short selling allowed, i.e., the investor can only purchase assets for her portfolio.  To simplify this setting we will consider a market with $S = 2$ assets only: asset $0$ is the cash asset and is worth $\$1$ throughout; asset $1$ is a risky stock whose value fluctuates (with equal up or down probability) according to the Cox-Ross-Rubinstein binomial tree~\cite{cox1979option} with $T = 2$ time steps and volatility $\sigma^2 = 0.05$ as depicted in Figure~\ref{fig:mr-tree}.

Within this setting, the investor is optimizing over $6$ variables -- investments in each asset at time $0$ and in each asset in either state at time $1$.  
Let $\hat x = (\hat x_{0,0} , \hat x_{0,1} , \hat x_{1,0}^U , \hat x_{1,1}^U , \hat x_{1,0}^D , \hat x_{1,1}^D) \in \R^6$ provide the investments at times $0,1$ and in assets $0,1$.  The investor is seeking to maximize her expected return and minimize her entropic risk at time $T = 2$, i.e.,
\begin{align*}
f_1(\hat x) &= -\frac{1}{2}\left[\left(\hat x_{1,0}^U + \frac{S_2^{UU} + S_2^{UD}}{2}\hat x_{1,1}^U\right) + \left(\hat x_{1,0}^D + \frac{S_2^{UD} + S_2^{DD}}{2}\hat x_{1,1}^D\right)\right] \\
f_2(\hat x) &= \frac{1}{\alpha}\log\left(\frac{1}{4}\left[e^{-\alpha[\hat x_{1,0}^U + S_2^{UU}\hat x_{1,1}^U]} + e^{-\alpha[\hat x_{1,0}^U + S_2^{UD}\hat x_{1,1}^U]} + e^{-\alpha[\hat x_{1,0}^D + S_2^{UD}\hat x_{1,1}^D]} + e^{-\alpha[\hat x_{1,0}^D + S_2^{DD}\hat x_{1,1}^D]}\right]\right)
\end{align*}
for risk-aversion parameter $\alpha > 0$.  Herein we will consider $\alpha = 1/2$ chosen arbitrarily.  
As this problem is convex, but not strictly convex, we augment this problem with a 3rd objective function: $f_3(\hat x) = \|\hat x\|_2^2$ which is always included with weight $\delta = 10^{-4}$.

Due to the no-short selling constraint, the investor is constrained by $M = S$ inequality constraints $g(\hat x) = -\hat x \in \R^6$.
In addition to these inequality constraints, this constructed dynamic portfolio is required to satisfy self-financing conditions with initial wealth $W = 10$ such that
\begin{align*}
W &= \hat x_{0,0} + S_0 \hat x_{0,1} \\
0 &= \left(\hat x_{0,0} + S_1^U \hat x_{0,1}\right) - \left(\hat x_{1,0}^U + S_1^U \hat x_{1,1}^U\right) \\
0 &= \left(\hat x_{0,0} + S_1^D \hat x_{0,1}\right) - \left(\hat x_{1,0}^D + S_1^D \hat x_{1,1}^D\right).
\end{align*}
As discussed within Sections~\ref{sec:background-vop} and~\ref{sec:mean-var}, we consider the modification of this problem to guarantee these equality constraints are satisfied.  Denote these self-financing constraints by $A\hat x = (W,0,0)^\T$; then we consider $N = 3$ variables with $\hat{x} := A_{\perp} x + \tilde x$ for $x \in \R^N$ with $A_{\perp} \in \R^{6 \times 3}$ and $\tilde x := \frac{W}{2}{\bf 1} \in \R^6$.

To consider the outer approximation, we need to consider the Lagrange dual function $d: \R^6 \times \wcal \to \R$ of the weighted-sum scalarizations (prior to augmentation).  For this mean-risk problem, the Lagrange dual function does not have a closed form solution, rather it is computed by solving the unconstrained problem
\begin{align*}
d(\lambda,w) := \inf_{x \in \R^3} \left[w^\T f(A_{\perp} x + \tilde x) - \lambda^\T(A_{\perp} x + \tilde x)\right]
\end{align*}
for any $\lambda \in \R^3$ and $w \in \wcal$.

With this setting, we have two neural networks to design: the primal ($x: \wcal \to \R^3$) and dual ($\lambda: \wcal \to \R^6$) neural networks.  
The primal neural network $x$ is designed with three hidden layers, each with 800 hidden nodes; the terminal (projection) activation function is considered with a tolerance of $5 \times 10^{-5}$ to guarantee primal feasibility as discussed in Remark~\ref{rem:tolerance} and with strictly feasible point $\bar{x} := {\bf 0} \in \R^3$.  The dual neural network $\lambda$ is designed with three hidden layers, each with 800 hidden nodes.  The terminal ReLU activation function is used to guarantee dual feasibility.
All intermediate activation functions (i.e., all but the terminal activation functions) are chosen to be the hyperbolic tangent function with linear linking between layers.  
As with the prior examples, the neural networks herein are computed with PyTorch on a local machine using the Adam optimizer with learning rate $10^{-4}$.  Here, we weight the complimentary slackness condition~\eqref{eq:kkt-cs} with $\eta = 10$.  We train these neural networks for $2000$ epochs over 4 training points $\{(0,1)^\T , (\frac{1}{3},\frac{2}{3})^\T , (\frac{2}{3},\frac{1}{3})^\T , (1,0)^\T\}$.  We wish to note that all other hyperparameters -- except the terminal activation functions discussed previously -- are chosen arbitrarily and were not found via, e.g., a grid search.

\begin{figure}
\centering
\begin{subfigure}[t]{0.3\textwidth}
\centering
\begin{tikzpicture}
\node at (0cm,0cm) (n0) {\footnotesize $S_0 = \$1.00$};
\node at (1.5cm,1cm) (nU) {\footnotesize $S_1^U = \$1.25$};
\node at (1.5cm,-1cm) (nD) {\footnotesize $S_1^D = \$0.80$};
\node at (3cm,2cm) (nUU) {\footnotesize $S_2^{UU} = \$1.56$};
\node at (3cm,0cm) (nUD) {\footnotesize $S_2^{UD} = \$1.00$};
\node at (3cm,-2cm) (nDD) {\footnotesize $S_2^{DD} = \$0.64$};

\draw[-,line width=.6mm] (n0) -- (nU);
\draw[-,line width=.6mm] (n0) -- (nD);
\draw[-,line width=.6mm] (nU) -- (nUU);
\draw[-,line width=.6mm] (nU) -- (nUD);
\draw[-,line width=.6mm] (nD) -- (nUD);
\draw[-,line width=.6mm] (nD) -- (nDD);
\end{tikzpicture}
\caption{Depiction of the price process as a tree. All transitions occur with probability $50\%$.}
\label{fig:mr-tree}
\end{subfigure}
~
\begin{subfigure}[t]{0.3\textwidth}
\centering
\includegraphics[width=\textwidth]{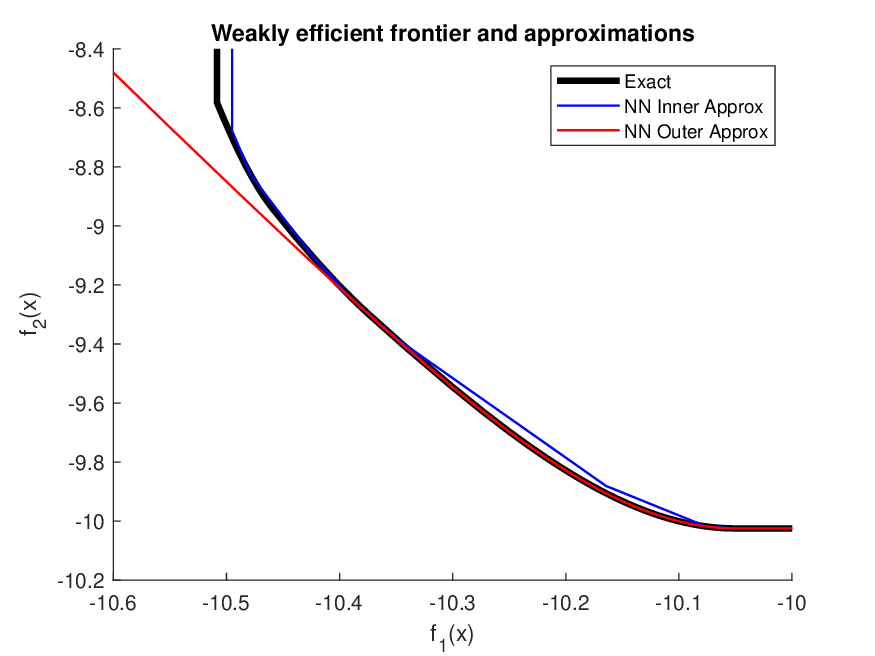}
\caption{Plot of weak efficient frontier (black) and approximations through neural networks (blue and red).} 
\label{fig:mr-efficient}
\end{subfigure}
~
\begin{subfigure}[t]{0.3\textwidth}
\centering
\includegraphics[width=\textwidth]{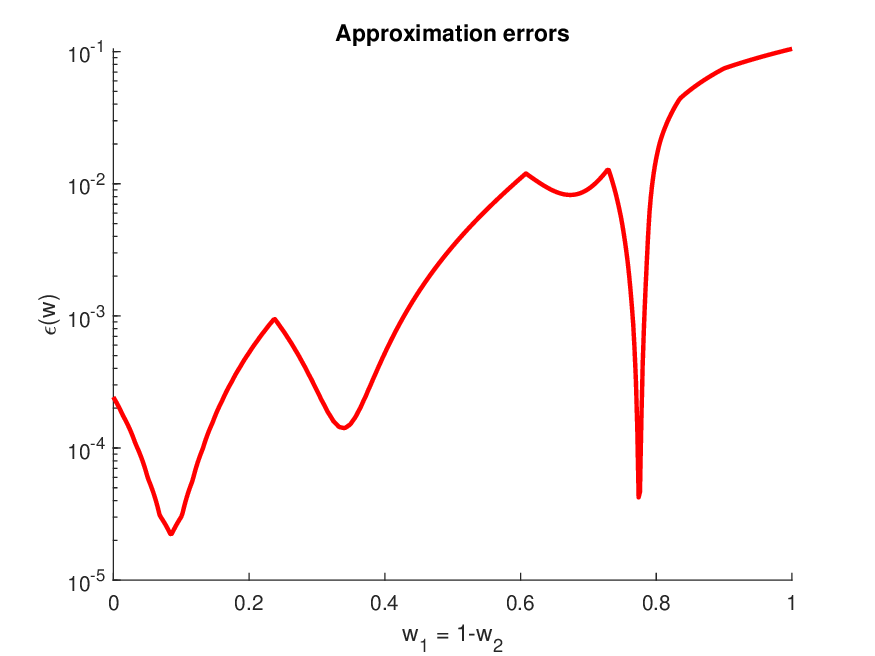}
\caption{Plot of the realized approximation errors $\epsilon(\cdot)$.}
\label{fig:mr-epsilon}
\end{subfigure}
\caption{Section~\ref{sec:mr}: Plot of the market model, efficient frontier, and approximation errors on 501 regularly spaced test scalarizations.  Neural networks are computed with 4 regularly spaced training scalarizations.}
\label{fig:mr}
\end{figure}

Figure~\ref{fig:mr-efficient} displays the true weak efficient frontier along with the neural network inner and outer approximations on 501 regularly spaced test scalarizations $w \in \{(i/500,1-i/500) \; | \; i \in \{0,1,...,500\}\}$.  The true weak efficient frontier is plotted with a solid black curve.  The inner approximation provided by the primal neural network (i.e., the boundary $\bd\cl\co\bigcup_{w \in \wcal}[f(x(w)) + \R^2_+]$) is plotted as a solid blue line and the outer approximation provided by the dual neural network (i.e., the boundary $\bd\bigcap_{w \in \wcal}\{y \in \R^2 \; | \; w^\T y \geq d(\lambda(w),w)\}$) is plotted as a solid red line.  In this scenario, the dual neural network uniformly provide the zero output, i.e., $\lambda \equiv {\bf 0}$.  We conjecture this occurs because the true dual value $\lambda^*(w)$ is nearly identically 0 throughout, and only deviates when $w_1 \approx 1$ or $w_2 \approx 0$ with $\lambda^*((1,0)) \approx (0.0257 \, , \, 0 \, , \, 0.0126 \, , \, 0 \, , \, 0.0126 \, , \, 0)^\T$.  This can be observed with the strong fit of the outer approximation except in the upper left corner of Figure~\ref{fig:mr-efficient}.  The strength of these approximations can further be seen in Figure~\ref{fig:mr-epsilon} where the errors $\epsilon(w)$ are plotted directly in a logarithmic scale.  With an improvement in the dual neural network to capture behavior for high values of $w_1$, we would expect these approximation errors to remain on the order of $10^{-3}$ or below. We again wish to remind the reader that the neural network architectures considered herein were chosen arbitrarily and not as the result of hyperparameter tuning.

\section{Conclusion}\label{sec:conclusion}
Within this work we proposed a neural network architecture that provides inner and outer approximations of the weakly efficient frontier for CVOPs. Through the use of the dual of the scalarization, we provided a functional of the primal and dual neural networks which provides an upper bound to the error of the inner approximation at each point on the weakly efficient frontier. To demonstrate the ability of the proposed method for large-scale problems, we computed the approximations of the efficient frontiers for multiple CVOPs.

We wish to conclude with an extension to this methodology which we believe would be of great interest.  Within this work, only the final activation function was investigated.  We did not explore the impact of different intermediate layer and activation function structures.  Optimizing this structure can reduce the size of the realized errors and improve the approximations.  We propose investigating such a hyperparameter optimization for specific classes of problems, e.g., for quadratic vector optimization as analytical structures are most likely to be tractable.

\section{Acknowledgement}
We would like to thank Gabriele Eichfelder for her very helpful comments on an earlier version of this paper.

\bibliographystyle{plain}
\bibliography{biblio}

\end{document}